\newtheorem{theorem}{Theorem}
\newtheorem{proposition}[theorem]{Proposition}
\newtheorem{lemma}[theorem]{Lemma}
\theoremstyle{definition}
\newtheorem{definition}[theorem]{Definition}
\newtheorem{problem}[theorem]{Problem}
\DeclareMathOperator{\dist}{dist}
\title{$k$-$L(2,1)$-Labelling for Planar Graphs is NP-Complete for $k\geq 4$}
\author{Nicole Eggemann\thanks{Brunel University, Kingston Lane, Uxbridge, UB8 3PH, UK. Supported by the EC Marie Curie programme NET-ACE (MEST-CT-2004-6724). {\tt Nicole.Eggemann@brunel.ac.uk}}\ ,
Fr\'ed\'eric Havet\thanks{projet Mascotte, I3S(CNRS and University of  Nice-Sophia Antipolis) and  INRIA,
2004 Route des Lucioles, BP 93, 06902 Sophia-Antipolis Cedex, France.
Partially supported by the european project {\sc FET - Aeolus}. {\tt Frederic.Havet@sophia.inria.fr.}}\ \ and Steven D. Noble\thanks{Brunel University, Kingston Lane, Uxbridge, UB8 3PH, UK. Partially supported by the Heilbronn Institute for Mathematical Research, Bristol, UK. {\tt Steven.Noble@brunel.ac.uk}}}
\begin{document}
\maketitle

\begin{abstract}
A mapping from the vertex set of a graph $G=(V,E)$ into an interval of integers
$\{0, \dots ,k\}$ is an $L(2,1)$-labelling of $G$ of span $k$ if any two
adjacent vertices are mapped onto integers that are at least 2
apart, and every two vertices at distance 2 are mapped onto
distinct integers. It is known that for any fixed $k\ge 4$, deciding
the existence of such a labelling is an NP-complete problem while it is polynomial for $k\leq 3$.
For even $k\geq 8$, it remains NP-complete when restricted to planar graphs.
In this paper, we show that it remains NP-complete for any $k \ge 4$ by reduction from Planar Cubic Two-Colourable Perfect Matching.
Schaefer stated without proof that Planar Cubic Two-Colourable Perfect Matching is NP-complete.
In this paper we give a proof of this.
\end{abstract}

\section{Introduction}
The Frequency Assignment Problem requires the assignment of frequencies to
radio transmitters in a broadcasting network with the aim of avoiding
undesired interference and minimising bandwidth. One of the longstanding graph theoretical models of this problem
is the notion of distance constrained
labelling of graphs. An {\it $L(2,1)$-labelling} of a graph $G$ is a
mapping from the vertex set of $G$ into the nonnegative integers such
that the labels assigned to adjacent vertices  differ by at least 2,
and labels assigned to vertices at distance 2 are different. The
{\it span} of such a labelling is the maximum label used. In this
model, the vertices of $G$ represent the transmitters and the edges
of $G$ express which pairs of transmitters are too close to each
other so that undesired interference may occur, even if the
frequencies assigned to them differ by 1. This model was introduced
by Roberts~\cite{Rob} and since then the concept has been
intensively studied (see the survey articles~\cite{calamoneri:06,yeh:06}). Much of the early research involved determining the optimal labelling of grids and is either folklore or buried in the engineering literature.

The minimum span of an $L(2,1)$-labelling of a graph $G$ is denoted by $\lambda_{2,1}(G)$.
Considerable effort has been spent on trying to resolve a conjecture of Griggs and Yeh~\cite{griggs+yeh} stating that $\lambda_{2,1}(G) \leq \Delta^2$ for graphs with maximum degree $\Delta$. For general graphs, the result has been established for large $\Delta$ by Havet, Reed and Sereni~\cite{havet+reed+sereni}.
For the special case of planar graphs with $\Delta\geq 7$, the conjecture follows from a result of van den Heuvel and McGuiness~\cite{heuvel+mcguiness} and has since been established for planar graphs with $\Delta \ne 3$ by Bella \textit{et al.}~\cite{bella}.
Furthermore, generalising Wegner's conjecture~\cite{wegner:77} on the chromatic number of squares of planar graphs,
it is conjectured that if $G$ is planar then $\lambda_{2,1}(G) \leq \frac{3}{2}\Delta +C$ for some absolute constant $C$. Havet et al.~\cite{havet+heuvel+mcdiarmid+reed:list-colouring-conference-version,havet+heuvel+mcdiarmid+reed:list-colouring-full-version} showed that this conjecture holds asymptotically:  $\lambda_{2,1}(G) \leq \frac{3}{2}\Delta +o(\Delta)$.
For outerplanar graphs with $\Delta \geq 8$, it has been shown that $\lambda_{2,1}(G) \leq \Delta+2$~\cite{calamoneri+petreschi:planar,koller:05,koller:09}.

In their seminal paper, Griggs and Yeh~\cite{griggs+yeh} proved that
determining $\lambda_{2,1}(G)$ is an NP-hard
problem. Fiala, Kloks and Kratochv\'{\i}l \cite{fiala+kloks+kratochvil:fixed-parameter-lambda} proved that deciding
$\lambda_{2,1}(G)\le k$ is NP-complete for every fixed $k\ge 4$.

Since then this problem has been shown to be NP-complete for some very restricted classes of graphs.
For instance, Bodlaender \textit{et al.}~\cite{bodlaender:04} showed that this problem is NP-complete when restricted to bipartite planar graphs if we require $k\ge 8$ and $k$ even.
Later Havet and Thomass\'e~\cite{havet} proved that for any $k\geq 4$,
it remains NP-complete when restricted to a different subclass of bipartite graphs, namely \emph{incidence graphs}, that is, those graphs which may be obtained by making a single subdivision of each edge of a graph.

When the span $k$ is part of the input, the problem
is nontrivial for trees but a polynomial time algorithm based on
bipartite matching was presented by Chang and Kuo in~\cite{chang+kuo}. Since then faster algorithms have been introduced by Hasunuma \textit{et al.} running in time $O(n^{1.75})$~\cite{hasunuma+ishii+ono+uni:fast} and more recently $O(n)$~\cite{hasunuma+ishii+ono+uni:linear} on trees with $n$ vertices.
The problem is still
solvable in polynomial time if the input graph is outerplanar~\cite{koller:05,koller:09}.

Moreover, somewhat
surprisingly, Fiala, Golovach and Kratochv\'{\i}l have shown that the problem becomes NP-complete for series-parallel
graphs~\cite{fiala+golovach+kratochvil:05}, and thus the $L(2,1)$-labelling problem belongs
to a handful of problems known to separate graphs of tree-width 1 and
2 by P/NP-completeness dichotomy.

In this paper we consider the following problem.

\begin{problem}[Planar $k$-$L(2,1)$-Labelling]\label{pro:main}
\mbox{ }\\
Let $k\ge 4$ be fixed. \newline
Instance: A planar graph $G$.\newline
Question: Is there an $L(2,1)$-labelling with span $k$?
\end{problem}
As we mentioned above, Bodlaender \textit{et al.}~\cite{bodlaender:04} showed that this problem is NP-complete if we require $k\ge 8$ and $k$ even.
We have read a suggestion in the literature that the problem is proved to be NP-complete for all $k\ge8$ in~\cite{fotakis+niko+papa:00}.
However this does not seem to be the case. In~\cite{fotakis+niko+papa:00} there is a proof showing that the corresponding problem where $k$ is specified as part of the input is NP-complete. This proof shows that the problem is NP-complete for certain fixed values of $k$. However it is far from clear for which values of $k$ this is true. The same authors also show in~\cite{fotakis+niko+papa:05} that the problem is NP-complete for $k=8$.

In this paper we first prove that Planar Cubic Two-Colourable Perfect Matching, which we define in the next section, is NP-Complete. This result was first stated by Schaefer~\cite{schaefer:78} but without proof.
In the second part of this paper we use this result in order to show that Problem~\ref{pro:main} is NP-complete.

\section{Preliminary results}\label{seccol}

The starting problem for our reductions is Not-All-Equal 3SAT, which is defined as follows~\cite{schaefer:78}.

\begin{definition}[\textsc{Not-All-Equal 3SAT}]
\mbox{ }\\
Instance: A set of clauses each having three literals.\newline
Question: Can the literals be assigned value true or false so that each clause has at least one true and at least one false literal?
\end{definition}

In~\cite{schaefer:78}, it is shown that this problem is NP-complete.

Our reduction involves an intermediate problem concerning a special form of two-colouring. In this section we define the intermediate
problem and show that it is NP-complete.
When $k=4$ or $k=5$, the final stage of our reduction is similar to the reduction in~\cite{fiala+kloks+kratochvil:fixed-parameter-lambda}. However we cannot use induction for higher values of $k$ in contrast with the situation in~\cite{fiala+kloks+kratochvil:fixed-parameter-lambda} and the problem from which the reduction starts in~\cite{fiala+kloks+kratochvil:fixed-parameter-lambda} is not known to be NP-complete for planar graphs. So considerably more work is required.

The following problem is also discussed in~\cite{schaefer:78}.

\begin{problem}[\textsc{Two-Colourable Perfect Matching}]\label{pro:matching}
\mbox{ }\\
Instance: A graph $G$.\newline
Question: Is there a colouring of the vertices of $G$ with colours black and white in which every vertex  has exactly one neighbour of the same colour?
\end{problem}

In~\cite{schaefer:78} it was shown that Two-Colourable Perfect Matching is NP-complete. We are more interested in the case where the input
is restricted to being a planar cubic graph. We call this variant, Planar Cubic Two-Colourable Perfect Matching defined formally as follows~\cite{schaefer:78}.

\begin{problem}[\textsc{Planar Cubic Two-Colourable Perfect Matching}]\label{pro:planarmatching}
\mbox{ }\\
Instance: A planar cubic graph $G$.\newline
Question: Is there a colouring of the vertices of $G$ with colours black and white in which every vertex  has exactly one neighbour of the same colour?
\end{problem}

Schaefer~\cite{schaefer:78} states that this problem is NP-complete but does not give the details of the proof.
We call a colouring as required in Problem~\ref{pro:planarmatching} a \emph{two-coloured perfect matching}.
This section is devoted to the proof of this result, using a reduction from Not-All-Equal 3SAT~\cite{schaefer:78}. As far as we know, no proof of this has ever been published.

We say that a colouring of the vertices of a graph with colours black and white is an \emph{almost two-coloured perfect matching} if every vertex of degree at least two is adjacent to exactly one vertex of the same colour.
We say an edge is \emph{monochromatic} if both end-vertices have the same colour and \emph{dichromatic} if its end-vertices have different colours.

\begin{figure}[ht]
\psset{xunit=0.5cm,yunit=0.5cm,runit=0.5cm}
\begin{center}
\begin{pspicture}(10,7)
\cnode*(5,7){0.15}{a}
\rput(5.375,7.175){$a$}
\cnode*(5,6){0.15}{b}
\rput(5.375,6.275){$b$}
\ncline{-}{a}{b}
\cnode*(3,5.3){0.15}{c}
\rput(3.1,5.7){$c$}
\cnode*(7,5.3){0.15}{d}
\rput(7.1,5.8){$d$}
\ncline{-}{b}{c}
\ncline{-}{b}{d}
\cnode*(2.5,4.5){0.15}{e}
\rput(2.1,4.5){$e$}
\cnode*(7.5,4.5){0.15}{h}
\rput(7.9,4.5){$h$}
\cnode*(3.5,4.5){0.15}{f}
\rput(3.9,4.5){$f$}
\cnode*(6.5,4.5){0.15}{g}
\rput(6.1,4.5){$g$}
\ncline{-}{c}{e}
\ncline{-}{c}{f}
\ncline{-}{d}{g}
\ncline{-}{d}{h}
\cnode*(2.5,2.5){0.15}{i}
\rput(2.1,2.9){$i$}
\cnode*(7.5,2.5){0.15}{l}
\rput(7.9,2.9){$l$}
\cnode*(3.5,2.5){0.15}{j}
\rput(3.9,2.9){$j$}
\cnode*(6.5,2.5){0.15}{k}
\rput(6.1,2.9){$k$}
\ncline{-}{e}{i}
\ncline{-}{f}{j}
\ncline{-}{g}{k}
\ncline{-}{h}{l}
\ncline{-}{e}{f}
\ncline{-}{g}{h}
\cnode*(3,1.5){0.15}{o}
\rput(2.55,1.5){$o$}
\cnode*(7,1.5){0.15}{p}
\rput(6.55,1.5){$p$}
\cnode*(3,0.5){0.15}{q}
\rput(2.55,0.5){$q$}
\cnode*(7,0.5){0.15}{r}
\rput(6.55,0.5){$r$}
\cnode*(1.5,2.5){0.15}{m}
\rput(1.4,2.9){$m$}
\cnode*(8.5,2.5){0.15}{n}
\rput(8.6,2.9){$n$}
\ncline{-}{i}{o}
\ncline{-}{j}{o}
\ncline{-}{o}{q}
\ncline{-}{p}{l}
\ncline{-}{k}{p}
\ncline{-}{p}{r}
\ncline{-}{m}{i}
\ncline{-}{n}{l}
\ncline{-}{j}{k}
\end{pspicture}
\end{center}
\caption{Planar graph $H$.}\label{fig1}
\end{figure}
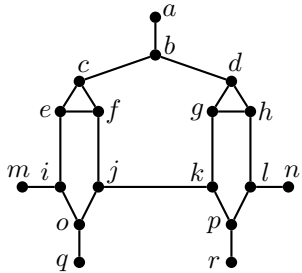

Let $H$ be the planar graph depicted in  Fig.~\ref{fig1}.
$H$ plays a key role in showing that Problem~\ref{pro:planarmatching} is NP-complete.
We need the following lemma.

\begin{lemma}\label{le:proph}
Any almost two-coloured perfect matching of $H$ has the following properties.
\begin{itemize}
\item Exactly one of the edges $ab,mi,ln$ is monochromatic.
\item Vertices $b,i,l$ receive the same colour.
\item Vertices $o,p,q,r$ receive the other colour to $b,i,l$.
\end{itemize}
\end{lemma}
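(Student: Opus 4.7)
The plan is to fix the colour of $b$ to be black without loss of generality and to propagate constraints through the graph, using the observation that each degree-three vertex must have exactly one same-coloured neighbour. The five vertices $a,m,n,q,r$ have degree $1$ and so carry no direct constraint; they will be pinned down at the very end.

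The first step is to analyse the triangle $cef$. A monochromatic triangle is impossible, because then each of $c$, $e$, $f$ would already have two same-coloured neighbours inside the triangle alone. So exactly two of $c,e,f$ share a colour, which leaves three cases. In each case I would use the constraint at $c$ (which also sees $b$), then at $e$ (which also sees $i$), then at $f$ (which also sees $j$) to deduce that $i$ and $j$ must both be black. Applying the same argument to the triangle $dgh$, with the roles $c\leftrightarrow d$, $e\leftrightarrow h$, $f\leftrightarrow g$, $i\leftrightarrow l$, $j\leftrightarrow k$, shows that $k$ and $l$ are also black. This already establishes that $b,i,j,k,l$ all receive the same colour, which covers the second bullet of the lemma.

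Next I would use the constraint at $o$: its two degree-three neighbours $i$ and $j$ are both black, so $o$ itself must be white, and then to give $o$ its one same-coloured neighbour the leaf $q$ must be white as well. The symmetric argument at $p$ forces $p$ and $r$ to be white, which is the third bullet. At this point $j$ already has $k$ as a same-coloured neighbour, so the remaining neighbour $f$ of $j$ must be white; symmetrically $g$ is white. Combined with the triangle analysis this pins down $e$ and $h$ as a function of whether $c$ and $d$ agree with $b$.

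Finally I would handle the first bullet by case-splitting on which of $a,c,d$ is the unique same-coloured neighbour of $b$. In each of the three possibilities the forced colours of $e$ and $h$ feed, via the constraints at $i$ (which sees $e,m,o$) and at $l$ (which sees $h,n,p$), into the colours of $m$ and $n$, and a direct inspection then shows that exactly one of the edges $ab$, $mi$, $ln$ is monochromatic. The main obstacle I anticipate is bookkeeping: the triangle analysis has several sub-cases that collapse only after one invokes the constraints further down the graph, and one has to check consistency with every previously derived colour. Once the forced colours of $i,j,k,l,f,g,o,p,q,r$ are in hand, the final case split is mechanical.
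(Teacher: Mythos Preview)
Your argument is correct. Both you and the paper begin with the observation that each of the triangles $cef$ and $dgh$ must have exactly one monochromatic edge, but the executions diverge from there. The paper proceeds by brute enumeration: it lists (via Figure~3) all six almost two-coloured perfect matchings of the subgraph induced by $\{c,e,f,i,j,k,m,o,q\}$, does the same on the symmetric side, and then checks which pairs are compatible and extend to all of $H$ (Figure~2), reading off the three claimed properties from the resulting six global colourings. Your approach instead propagates constraints one vertex at a time from the fixed colour of $b$: first forcing $i,j$ (and symmetrically $k,l$) to agree with $b$ via the triangle analysis, then forcing $o,p,q,r$ to the opposite colour, then using the constraints at $j$ and $k$ to pin down $f$ and $g$, and finally case-splitting on the unique same-coloured neighbour of $b$ among $a,c,d$. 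This is cleaner and avoids the need for figures or an exhaustive list. The paper's route has the side benefit of exhibiting explicitly that all three possibilities in the first bullet are realised, which you do not establish but which the lemma as stated does not require; that realisability is, however, used later in the proof of Lemma~\ref{le:colouringc}, so if you were writing the full paper you would eventually need it.
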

\begin{proof}
Consider the triangles on the vertices $c,e,f$ and $g,h,d$. In order to obtain an almost two-coloured perfect matching exactly one of the edges $ce, ef$ and $cf$ must be monochromatic. The same is true for the triangle on the vertices $g,h,d$.
Now consider the subgraph of $H$ induced by the vertices $c,e,f,i,j,m,k,o,q$. In Fig.~\ref{fig3} three of the six almost two-coloured perfect matchings of this subgraph are depicted with monochromatic edges shown by heavy lines. The other three two-coloured perfect matchings are obtained by interchanging the colours. It follows that $oq$ and $pr$ must be monochromatic.

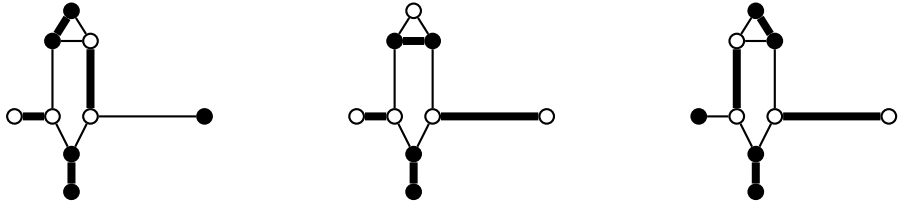
\begin{figure}[ht]
\begin{center}
\psset{xunit=0.5cm,yunit=0.5cm,runit=0.5cm}

\begin{pspicture}(24,6.5)

\cnode*(2,5.3){0.22}{c}

\cnode*(1.5,4.5){0.22}{e}

\cnode(2.5,4.5){0.22}{f}

\ncline[linewidth=3pt]{-}{c}{e}
\ncline{-}{c}{f}
\ncline{-}{d}{g}

\cnode(1.5,2.5){0.22}{i}

\cnode(2.5,2.5){0.22}{j}
\cnode*(5.5,2.5){0.22}{k}
\ncline{-}{e}{i}
\ncline[linewidth=3pt]{-}{f}{j}
\ncline{-}{e}{f}
\cnode*(2,1.5){0.22}{o}
\cnode*(2,0.5){0.22}{q}
\cnode(0.5,2.5){0.22}{m}
\ncline{-}{i}{o}
\ncline{-}{j}{o}
\ncline[linewidth=3pt]{-}{o}{q}
\ncline[linewidth=3pt]{-}{m}{i}
\ncline{-}{j}{k}
\cnode(11,5.3){0.22}{c}
\cnode*(10.5,4.5){0.22}{e}
\cnode*(11.5,4.5){0.22}{f}
\ncline{-}{c}{e}
\ncline{-}{c}{f}
\cnode(10.5,2.5){0.22}{i}
\cnode(11.5,2.5){0.22}{j}
\cnode(14.5,2.5){0.22}{k}
\ncline{-}{e}{i}
\ncline{-}{f}{j}
\ncline[linewidth=3pt]{-}{e}{f}
\cnode*(11,1.5){0.22}{o}
\cnode*(11,0.5){0.22}{q}
\cnode(9.5,2.5){0.22}{m}
\ncline{-}{i}{o}
\ncline{-}{j}{o}
\ncline[linewidth=3pt]{-}{o}{q}
\ncline[linewidth=3pt]{-}{m}{i}
\ncline[linewidth=3pt]{-}{j}{k}

\cnode*(20,5.3){0.22}{c}
\cnode(19.5,4.5){0.22}{e}
\cnode*(20.5,4.5){0.22}{f}
\ncline{-}{c}{e}
\ncline[linewidth=3pt]{-}{c}{f}
\cnode(19.5,2.5){0.22}{i}
\cnode(20.5,2.5){0.22}{j}
\cnode(23.5,2.5){0.22}{k}
\ncline[linewidth=3pt]{-}{e}{i}
\ncline{-}{f}{j}
\ncline{-}{e}{f}
\cnode*(20,1.5){0.22}{o}
\cnode*(20,0.5){0.22}{q}
\cnode*(18.5,2.5){0.22}{m}
\ncline{-}{i}{o}
\ncline{-}{j}{o}
\ncline[linewidth=3pt]{-}{o}{q}
\ncline{-}{m}{i}
\ncline[linewidth=3pt]{-}{j}{k}
\end{pspicture}
\end{center}
\caption{Three almost two-coloured perfect matchings of the subgraph of $H$ induced by the vertices $c,e,f,m,i,j,k,o,q$.}\label{fig3}
\end{figure}

By symmetry the same applies to the subgraph of $H$ induced by the vertices $d,g,h$, $j,k,l,n,p,r$. Considering which pairs of these almost two-coloured
perfect matchings are compatible and extend to an almost two-coloured perfect matching of $H$ shows that there are only six possibilities.
In Fig.~\ref{fig2} three possible almost two-coloured perfect matchings are depicted.
The only other possible almost two-coloured perfect matchings are obtained by interchanging the two colours.
Clearly these all have the properties described in the lemma.

\begin{figure}[ht]
\psset{xunit=0.5cm,yunit=0.5cm,runit=0.5cm}
\begin{center}
\begin{pspicture}(1,0)(25,7)
\cnode*(5,7){0.22}{a}
\cnode*(5,6){0.22}{b}
\ncline[linewidth=3pt]{-}{a}{b}
\cnode(3,5.3){0.22}{c}
\cnode(7,5.3){0.22}{d}
\ncline{-}{b}{c}
\ncline{-}{b}{d}
\cnode*(2.5,4.5){0.22}{e}
\cnode*(7.5,4.5){0.22}{h}
\cnode(3.5,4.5){0.22}{f}
\cnode(6.5,4.5){0.22}{g}
\ncline{-}{c}{e}
\ncline[linewidth=3pt]{-}{c}{f}
\ncline[linewidth=3pt]{-}{d}{g}
\ncline{-}{d}{h}
\cnode*(2.5,2.5){0.22}{i}
\cnode*(7.5,2.5){0.22}{l}
\cnode*(3.5,2.5){0.22}{j}
\cnode*(6.5,2.5){0.22}{k}
\ncline[linewidth=3pt]{-}{e}{i}
\ncline{-}{f}{j}
\ncline{-}{g}{k}
\ncline[linewidth=3pt]{-}{h}{l}
\ncline{-}{e}{f}
\ncline{-}{g}{h}
\cnode(3,1.5){0.22}{o}
\cnode(7,1.5){0.22}{p}
\cnode(3,0.5){0.22}{q}
\cnode(7,0.5){0.22}{r}
\cnode(1.5,2.5){0.22}{m}
\cnode(8.5,2.5){0.22}{n}
\ncline{-}{i}{o}
\ncline{-}{j}{o}
\ncline[linewidth=3pt]{-}{o}{q}
\ncline{-}{p}{l}
\ncline{-}{k}{p}
\ncline[linewidth=3pt]{-}{p}{r}
\ncline{-}{m}{i}
\ncline{-}{n}{l}
\ncline[linewidth=3pt]{-}{j}{k}

\cnode(13,7){0.22}{a}
\cnode*(13,6){0.22}{b}
\ncline{-}{a}{b}
\cnode*(11,5.3){0.22}{c}
\cnode(15,5.3){0.22}{d}
\ncline[linewidth=3pt]{-}{b}{c}
\ncline{-}{b}{d}
\cnode(10.5,4.5){0.22}{e}
\cnode*(15.5,4.5){0.22}{h}
\cnode(11.5,4.5){0.22}{f}
\cnode(14.5,4.5){0.22}{g}
\ncline{-}{c}{e}
\ncline{-}{c}{f}
\ncline[linewidth=3pt]{-}{d}{g}
\ncline{-}{d}{h}
\cnode*(10.5,2.5){0.22}{i}
\cnode*(15.5,2.5){0.22}{l}
\cnode*(11.5,2.5){0.22}{j}
\cnode*(14.5,2.5){0.22}{k}
\ncline{-}{e}{i}
\ncline{-}{f}{j}
\ncline{-}{g}{k}
\ncline[linewidth=3pt]{-}{h}{l}
\ncline[linewidth=3pt]{-}{e}{f}
\ncline{-}{g}{h}
\cnode(11,1.5){0.22}{o}
\cnode(15,1.5){0.22}{p}
\cnode(11,0.5){0.22}{q}
\cnode(15,0.5){0.22}{r}
\cnode*(9.5,2.5){0.22}{m}
\cnode(16.5,2.5){0.22}{n}
\ncline{-}{i}{o}
\ncline{-}{j}{o}
\ncline[linewidth=3pt]{-}{o}{q}
\ncline{-}{p}{l}
\ncline{-}{k}{p}
\ncline[linewidth=3pt]{-}{p}{r}
\ncline[linewidth=3pt]{-}{m}{i}
\ncline{-}{n}{l}
\ncline[linewidth=3pt]{-}{j}{k}

\cnode(21,7){0.22}{a}
\cnode*(21,6){0.22}{b}
\ncline{-}{a}{b}
\cnode(19,5.3){0.22}{c}
\cnode*(23,5.3){0.22}{d}
\ncline{-}{b}{c}
\ncline[linewidth=3pt]{-}{b}{d}
\cnode*(18.5,4.5){0.22}{e}
\cnode(23.5,4.5){0.22}{h}
\cnode(19.5,4.5){0.22}{f}
\cnode(22.5,4.5){0.22}{g}
\ncline{-}{c}{e}
\ncline[linewidth=3pt]{-}{c}{f}
\ncline{-}{d}{g}
\ncline{-}{d}{h}
\cnode*(18.5,2.5){0.22}{i}
\cnode*(23.5,2.5){0.22}{l}
\cnode*(19.5,2.5){0.22}{j}
\cnode*(22.5,2.5){0.22}{k}
\ncline[linewidth=3pt]{-}{e}{i}
\ncline{-}{f}{j}
\ncline{-}{g}{k}
\ncline{-}{h}{l}
\ncline{-}{e}{f}
\ncline[linewidth=3pt]{-}{g}{h}
\cnode(19,1.5){0.22}{o}
\cnode(23,1.5){0.22}{p}
\cnode(19,0.5){0.22}{q}
\cnode(23,0.5){0.22}{r}
\cnode(17.5,2.5){0.22}{m}
\cnode*(24.5,2.5){0.22}{n}
\ncline{-}{i}{o}
\ncline{-}{j}{o}
\ncline[linewidth=3pt]{-}{o}{q}
\ncline{-}{p}{l}
\ncline{-}{k}{p}
\ncline[linewidth=3pt]{-}{p}{r}
\ncline{-}{m}{i}
\ncline[linewidth=3pt]{-}{n}{l}
\ncline[linewidth=3pt]{-}{j}{k}
\end{pspicture}
\end{center}
\caption{Almost two-coloured perfect matchings of $H$.}\label{fig2}
\end{figure}
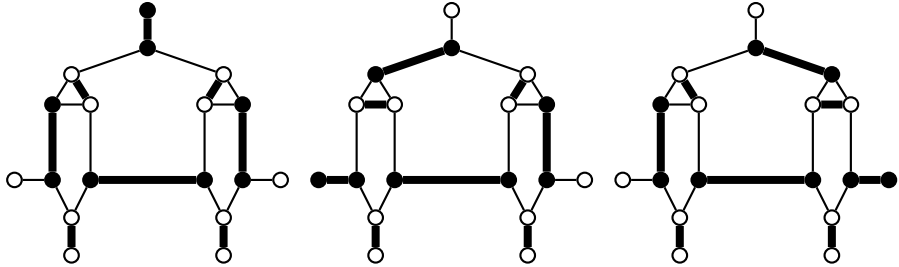
\end{proof}

We define what we call the \emph{clause gadget graph} $K$ as follows, see Fig.~\ref{fig13}. Take three copies of $H$, namely $H_1$, $H_2$ and $H_3$.
We label the vertices by adding the subscript $i \in \{1,2,3\}$ to the corresponding label of $H$. Now identify $a_1,a_2,a_3$ into a single vertex $a$,
remove vertices $m_1,m_2,m_3,n_1,n_2,n_3$ and their incident edges and replace them with edges $l_1i_2,l_2i_3,l_3i_1$. Notice that $K$ is planar
and every vertex has degree three, except for $q_1,q_2,q_3$ and $r_1,r_2,r_3$.

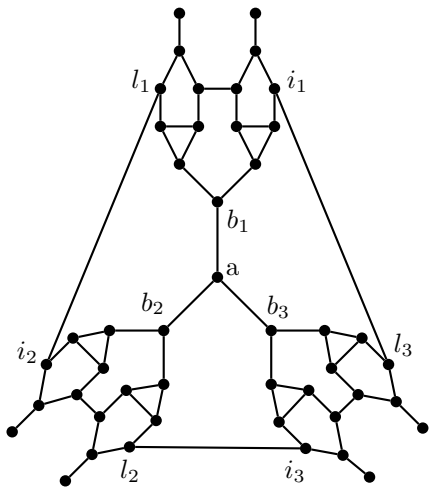
\begin{figure}[ht]
\psset{xunit=1cm,yunit=1cm,runit=1cm}
\begin{center}
\begin{pspicture}(7,7.5)
\cnode*(3.5,3.5){0.075}{a}
\rput(3.7,3.6){a}
\cnode*(3.5,4.5){0.075}{b}
\rput(3.775,4.25){$b_1$}
\cnode*(2.793,2.793){0.075}{c}
\rput(2.65,3.15){$b_2$}
\cnode*(4.207,2.793){0.075}{d}
\rput(4.3,3.1){$b_3$}
\ncline{-}{a}{b}
\ncline{-}{a}{c}
\ncline{-}{a}{d}
\cnode*(3,5){0.075}{e}
\cnode*(4,5){0.075}{f}
\ncline{-}{b}{e}
\ncline{-}{b}{f}
\cnode*(2.75,5.5){0.075}{g}
\cnode*(3.25,5.5){0.075}{h}
\cnode*(3.75,5.5){0.075}{i}
\cnode*(4.25,5.5){0.075}{j}
\ncline{-}{g}{e}
\ncline{-}{e}{h}
\ncline{-}{i}{f}
\ncline{-}{j}{f}
\ncline{-}{i}{j}
\ncline{-}{h}{g}
\cnode*(2.75,6){0.075}{k}
\rput(2.5,6.1){$l_1$}
\cnode*(3.25,6){0.075}{l}
\cnode*(3.75,6){0.075}{m}
\cnode*(4.25,6){0.075}{n}
\rput(4.55,6.1){$i_1$}
\ncline{-}{g}{k}
\ncline{-}{l}{h}
\ncline{-}{i}{m}
\ncline{-}{j}{n}
\ncline{-}{m}{l}
\cnode*(3,6.5){0.075}{o}
\cnode*(4,6.5){0.075}{p}
\ncline{-}{o}{k}
\ncline{-}{l}{o}
\ncline{-}{p}{m}
\ncline{-}{p}{n}
\cnode*(3,7){0.075}{q}
\cnode*(4,7){0.075}{r}
\ncline{-}{p}{r}
\ncline{-}{q}{o}
\cnode*(4.91,2.793){0.075}{d1}
\cnode*(4.207,2.08){0.075}{d2}
\ncline{-}{d}{d1}
\ncline{-}{d}{d2}
\cnode*(5.4,2.693){0.075}{d3}
\cnode*(5,2.293){0.075}{d4}
\ncline{-}{d3}{d1}
\ncline{-}{d4}{d1}
\ncline{-}{d3}{d4}
\cnode*(4.7,2){0.075}{d5}
\cnode*(4.31,1.58){0.075}{d6}
\ncline{-}{d5}{d2}
\ncline{-}{d6}{d2}
\ncline{-}{d5}{d6}
\cnode*(5.75,2.343){0.075}{d7}
\rput(5.95,2.6){$l_3$}
\cnode*(5.35,1.943){0.075}{d8}
\ncline{-}{d3}{d7}
\ncline{-}{d4}{d8}
\cnode*(5.85,1.85){0.075}{d9}
\ncline{-}{d9}{d7}
\ncline{-}{d9}{d8}
\cnode*(5.05,1.65){0.075}{d10}
\cnode*(4.66,1.23){0.075}{d11}
\rput(4.52,0.93){$i_3$}
\ncline{-}{d10}{d5}
\ncline{-}{d11}{d6}
\cnode*(5.15,1.14){0.075}{d12}
\ncline{-}{d10}{d12}
\ncline{-}{d11}{d12}
\cnode*(5.5,0.85){0.075}{d13}
\ncline{-}{d13}{d12}
\cnode*(6.2,1.5){0.075}{d14}
\ncline{-}{d9}{d14}
\ncline{-}{d10}{d8}
\cnode*(2.08,2.793){0.075}{c1}
\cnode*(2.793,2.08){0.075}{c2}
\ncline{-}{c}{c1}
\ncline{-}{c}{c2}
\cnode*(1.6,2.693){0.075}{c11}
\cnode*(2,2.293){0.075}{c12}
\ncline{-}{c11}{c1}
\ncline{-}{c12}{c1}
\ncline{-}{c12}{c11}
\cnode*(1.25,2.343){0.075}{c13}
\rput(1,2.5){$i_2$}
\cnode*(1.65,1.943){0.075}{c14}
\ncline{-}{c11}{c13}
\ncline{-}{c12}{c14}
\cnode*(1.15,1.8){0.075}{c15}
\ncline{-}{c15}{c13}
\ncline{-}{c15}{c14}
\cnode*(0.8,1.45){0.075}{c16}
\ncline{-}{c15}{c16}
\cnode*(2.3,2){0.075}{c21}
\cnode*(2.693,1.6){0.075}{c22}
\ncline{-}{c21}{c2}
\ncline{-}{c22}{c2}
\ncline{-}{c22}{c21}
\cnode*(1.95,1.65){0.075}{c23}
\cnode*(2.343,1.25){0.075}{c24}
\rput(2.35,0.93){$l_2$}
\ncline{-}{c21}{c23}
\ncline{-}{c22}{c24}
\cnode*(1.85,1.15){0.075}{c25}
\ncline{-}{c25}{c23}
\ncline{-}{c25}{c24}
\cnode*(1.5,0.8){0.075}{c26}
\ncline{-}{c25}{c26}
\ncline{-}{c23}{c14}
\ncline{-}{c24}{d11}
\ncline{-}{c13}{k}
\ncline{-}{d7}{n}
\end{pspicture}
\end{center}
\caption{Planar clause gadget $K$.}\label{fig13}
\end{figure}

\begin{lemma}\label{le:colouringc}
A two-colouring of $\bigcup^3_{t=1}\{o_t,q_t,p_t,r_t\}\cup \{a\}$ may be extended to an almost two-coloured perfect matching of $K$ if and only if
\begin{itemize}
\item For each $t=1,2,3$, $o_tq_t$, $p_tr_t$ are monochromatic and $o_t,p_t,q_t,r_t$ all receive the same colour.
\item For exactly two values of $t=1,2,3$, the vertices $o_t,p_t,q_t,r_t$ receive the same colour as $a$.
\end{itemize}
\end{lemma}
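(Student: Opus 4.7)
The strategy is to reduce to Lemma~\ref{le:proph} one copy at a time. Since the vertices $m_t$ and $n_t$ removed from each $H_t$ before forming $K$ are degree-one leaves in $H$, and since their neighbours $i_t,l_t$ in $H$ are now joined instead to $l_{t-1},i_{t+1}$ (indices mod $3$) in $K$, the restriction of any almost two-coloured perfect matching of $K$ to $V(H_t)\cup\{l_{t-1},i_{t+1}\}$ is an almost two-coloured perfect matching of a copy of $H$ (with $m_t$ identified with $l_{t-1}$ and $n_t$ with $i_{t+1}$). Hence Lemma~\ref{le:proph} applies to each $H_t$.

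For the forward direction, Lemma~\ref{le:proph} applied to each $H_t$ immediately yields that $o_t,p_t,q_t,r_t$ share one colour and that $o_tq_t$ and $p_tr_t$ are monochromatic, giving the first bullet. Writing $\chi_t$ for the common colour of $b_t,i_t,l_t$ (opposite to that of $o_t$) and $\chi_a$ for the colour of $a$, Lemma~\ref{le:proph} further says that exactly one of $ab_t$, $l_{t-1}i_t$, $l_ti_{t+1}$ is monochromatic, which translates into exactly one of $\chi_a=\chi_t$, $\chi_{t-1}=\chi_t$, $\chi_t=\chi_{t+1}$ holding. Since $a$ has degree three in $K$ with neighbours $b_1,b_2,b_3$ and must have exactly one same-colour neighbour, the equality $\chi_a=\chi_t$ holds for precisely one value of $t$; equivalently, $o_t$ has the same colour as $a$ for precisely two values of $t$, which is the second bullet.

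For the converse, assume both bullets hold and, by relabelling, suppose $o_1,p_1,q_1,r_1$ and $o_2,p_2,q_2,r_2$ share $a$'s colour while $o_3,p_3,q_3,r_3$ take the opposite colour. Assign $b_t,i_t,l_t$ the colour opposite to $o_t$, and extend the colouring in each $H_t$ using a matching from Lemma~\ref{le:proph}: choose the matching of $H_3$ with $ab_3$ monochromatic, the matching of $H_1$ with $l_1n_1$ monochromatic, and the matching of $H_2$ with $m_2i_2$ monochromatic. Once the colour of $b,i,l$ is prescribed, Lemma~\ref{le:proph} supplies exactly one such matching for each of the three possible choices of monochromatic edge among $ab, mi, ln$, so the three matchings exist. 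The three choices agree along the cross-edges (only $l_1i_2$ is monochromatic, while $l_2i_3$ and $l_3i_1$ are dichromatic), so the colours prescribed to $l_{t-1}$ and $i_{t+1}$ in $K$ coincide with the colours that $m_t$ and $n_t$ would receive in the matching chosen for $H_t$. Therefore every internal vertex of each $H_t$ has the correct number of same-colour neighbours, and $a$ has $b_3$ as its unique same-colour neighbour.

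The main delicate point is the bookkeeping in the converse direction, namely checking that the three chosen matchings are pairwise compatible along the cross-edges $l_1i_2, l_2i_3, l_3i_1$; this is made possible by the flexibility of Lemma~\ref{le:proph}, which prescribes a matching of $H$ for every colour swap of $b,i,l$ and every choice of which of $ab,mi,ln$ is made monochromatic.
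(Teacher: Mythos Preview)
Your argument is correct and follows essentially the same approach as the paper: apply Lemma~\ref{le:proph} to each $H_t$ for the forward direction, then use the degree-three constraint at $a$ to pin down the second bullet; for the converse, colour the $b_t,i_t,l_t$ opposite to $o_t$ and pick, for each $t$, the appropriate one of the three matchings from Lemma~\ref{le:proph}. Your version is a bit more explicit than the paper's about why restricting to $V(H_t)\cup\{l_{t-1},i_{t+1}\}$ yields a copy of $H$ and about the cross-edge compatibility in the converse, but the substance is the same.
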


\begin{proof}
We first show that any almost two-coloured perfect matching of $K$ must have the two properties in the lemma.

The first property is an immediate consequence of Lemma~\ref{le:proph}.

To show that the second property holds, recall
that exactly one neighbour of $a$ must receive the same colour as $a$. Let $b_{t_1}$ for $1 \le t_1 \le 3$ be this neighbour. Then from Lemma~\ref{le:proph} we know that $b_{t_1}$ must have the opposite colour to $o_{t_1},p_{t_1},q_{t_1},r_{t_1}$. Since the other neighbours of $a$, namely $b_{t_2}$ and $b_{t_3}$ for $t_2,t_3 \in \{1,2,3\} \backslash \{t_1 \}$, receive the opposite colour to $a$, the vertices $o_{t_2},p_{t_2},q_{t_2},r_{t_2},o_{t_3},p_{t_3},q_{t_3},r_{t_3}$ must receive the same colour as $a$.

Now we show that any two-colouring of $\bigcup^3_{t=1}\{o_t,q_t,p_t,r_t,b_t \}\cup \{a\}$ satisfying the conditions of the lemma may be extended to an almost two-coloured perfect matching of $K$. Suppose without loss of generality that $a$ is coloured black and $o_1,p_1,q_1,r_1$ are coloured white. Then colour $l_1,i_1$ black and $l_2,i_2,l_3,i_3$ white. This colouring may be extended to an almost two-coloured perfect matching using the colourings of Fig.~\ref{fig2} and the colourings obtained from those in Fig.~\ref{fig2} by interchanging the colours.
\end{proof}

We now move a step towards the main result of this section with the following proposition.
\begin{proposition}\label{prop:1}
Problem~\ref{pro:matching} is NP-complete if the input is restricted to cubic graphs.
\end{proposition}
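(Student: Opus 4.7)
Membership in NP is immediate (check the two-colouring in polynomial time), so I focus on NP-hardness by reducing from Not-All-Equal 3SAT. Given an NAE-3SAT instance $\phi$ with variables $x_1,\ldots,x_n$ and clauses $C_1,\ldots,C_m$, the plan is to build a cubic graph $G_\phi$ in two layers. In the clause layer, for each clause $C_j$ with literals $\ell_1^j,\ell_2^j,\ell_3^j$, I insert a disjoint copy $K_j$ of the clause gadget $K$ so that the port-pair $(q_t^j,r_t^j)$ represents the literal $\ell_t^j$. By Lemma~\ref{le:colouringc}, the boundary colourings extendible to an almost two-coloured perfect matching of $K_j$ are exactly those assigning each port-pair a single colour and not colouring all three pairs alike, i.e.\ the NAE condition on the three literals of $C_j$.

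In the variable layer, for each variable $x_i$ I construct a cubic sub-gadget $V_i$ interconnecting the port-pairs from all occurrences of $x_i$, with these properties: every port vertex becomes degree exactly $3$; all positive-occurrence port-pairs share one colour while all negative-occurrence port-pairs take the opposite colour; and the colouring admits an extension to an almost two-coloured perfect matching inside $V_i$. The key observation, via Lemma~\ref{le:proph}, is that each port already lies on a monochromatic edge inside its clause gadget, so any new edge incident to a port is forced to be dichromatic. Hence a single edge between two ports from different clauses is a \emph{negation link}, forcing the two port-pairs to receive opposite colours. An \emph{equality link} between two port-pairs is then obtained by chaining two negation links through a small auxiliary cubic sub-gadget containing a fresh internal monochromatic edge, which forces the two linked port-pairs to share a common colour. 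Arranging equality links between consecutive like-signed occurrences and negation links between unlike-signed occurrences, cyclically around the occurrences of $x_i$, yields $V_i$; the cycle closes consistently because the number of sign changes around any cycle is always even.

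For correctness, the forward direction takes a NAE-satisfying assignment $\tau$, colours each port-pair according to the truth value of its literal under $\tau$ (black for true, white for false), and extends this by Lemma~\ref{le:colouringc} inside each $K_j$ and by construction inside each $V_i$. Conversely, any two-coloured perfect matching of $G_\phi$ restricts to an almost two-coloured perfect matching of each $K_j$, because the single external edge at each port is forced to be dichromatic and so the port's unique same-colour neighbour must lie inside $K_j$; Lemma~\ref{le:colouringc} then gives a well-defined colour for each port-pair per clause satisfying the NAE condition, and the link structure of each $V_i$ ensures these port-pair colours form a globally consistent truth assignment.

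The main technical obstacle is the explicit design of the equality link: it must be cubic, admit both monochromatic boundary configurations (both adjacent port-pairs black or both white) when its boundary edges are forced dichromatic, and admit no dichromatic boundary configuration that would let the two linked port-pairs disagree. I plan to realise it as a fixed small sub-graph on roughly four to six internal vertices and verify the required matching behaviour by direct case analysis on the handful of possible internal colourings. Secondary bookkeeping is routine: pad $\phi$ with trivially NAE-satisfiable dummy clauses so that every variable has enough occurrences for the cyclic link-structure, and allocate which of $q_t^j$ or $r_t^j$ receives which of the two external link-edges incident to its port-pair.
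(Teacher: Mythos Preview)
Your plan differs substantially from the paper's and, while it heads in a workable direction, it misses the key simplification and leaves a real gap in the cubicity bookkeeping.

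The paper never attaches external edges to the pendant ports $q_t,r_t$. Instead it \emph{deletes} them: for each literal $\ell$ appearing in clauses $C_{i_1},\dots,C_{i_s}$, one pendant labelled $\ell$ is removed from $K(C_{i_j})$ and one from $K(C_{i_{j+1}})$, and the two resulting half-edges are identified, cyclically in $j$. Each such identifying edge now joins a vertex $o_t$ (or $p_t$) of one clause gadget directly to a vertex $o_{t'}$ (or $p_{t'}$) of another. Since this edge plays the role of the deleted pendant edge inside each gadget, Lemma~\ref{le:colouringc} forces it to be \emph{monochromatic}. Thus the paper obtains equality links for free, with no auxiliary gadgets, and the graph is automatically cubic because $o_t$ and $p_t$ already had degree three in $K$. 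It also works per \emph{literal}, not per variable, so no negation machinery is needed at all.

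Your route---keep the pendants, observe that an external edge at a pendant is forced dichromatic (this is correct: the restriction to $K_j$ is automatically an almost two-coloured perfect matching because the pendants have degree~$1$ there, so Lemma~\ref{le:colouringc} applies and forces $o_tq_t$, $p_tr_t$ monochromatic), then synthesise equality links by chaining two such negation edges through an undesigned auxiliary cubic block---is considerably more elaborate and, as written, fails to produce a cubic graph. Each pendant $q_t$ has degree~$1$ in $K$; giving it ``one of the two external link-edges incident to its port-pair'' raises it to degree~$2$, not~$3$, contradicting your own requirement that every port vertex become degree exactly~$3$. If instead each pendant receives two external edges, then each port-pair carries four outgoing links and your cycle-of-occurrences picture no longer matches. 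This is repairable with further gadgetry, but it is not yet repaired; note that the paper's half-edge splicing trick solves both the equality-link problem and the cubicity problem in one stroke.
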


\begin{proof}
Given an instance of Not-All-Equal 3SAT with clauses $C_1,...,C_m$, construct a graph as follows. For every clause $C$ take a copy of the clause gadget graph $K(C)$ and do the following. Suppose without loss of generality that $C$ has literals $x_1,x_2,x_3$. Label the two vertices of degree one of the subgraph $H_i$ of $K(C)$ and their neighbours in $H_i$ with $x_i$.

Now for each literal $x$ do the following. Suppose that literal $x$ appears in clauses $C_{i_1},...,C_{i_k}$. (If $x$ appears twice or three times in a clause $C_r$ then add $C_r$ twice or three times to this list.) For every $j=1,...,k-1$ remove either one of the
vertices of degree one labelled $x$ from $K(C_{i_j})$ and from $K(C_{i_{j+1}})$ leaving two half-edges. Now identify these two half edges to form an edge joining
$K(C_{i_j})$ and $K(C_{i_{j+1}})$. Finally do the same thing with the remaining two edges labelled $x$ in $C_{i_k}$ and $C_{i_1}$. We call the graph  obtained $G$.

Now suppose that there is a solution $S$ of the instance of Not-All-Equal 3SAT given.
For all literals $x$ in Not-All-Equal 3SAT, colour all vertices in $G$ that are labelled $x$ with colour white if $S(x)$ is true and black if $S(x)$ is false. We now show that this colouring can be extended to a two-coloured perfect matching of $G$. First note that all edges joining copies of $K$ are monochromatic since their end-vertices are labelled with the same literal. In the next step colour the vertex $a$ in each copy of $K$ so that it has the same colour as the vertices $o_t,p_t$ for exactly two values of $t=1,2,3$. This is possible because for $t=1,2,3$, $o_t,p_t$ are labelled with literals $x_1,x_2,x_3$ which cannot have all the same value as they belong to one clause.
By Lemma~\ref{le:colouringc} we can extend the colouring of each copy of $K$ to an almost two-coloured perfect matching of $K$ which yields a two-coloured perfect matching of $G$.

Now suppose there is a two-coloured perfect matching of $G$. By Lemma~\ref{le:colouringc} the edges joining the copies of $K$ must be monochromatic. All vertices in $G$ labelled with the same literal therefore must have the same colour and in each copy of $K$, for exactly two values of $t=1,2,3$, the vertices $o_t,p_t$ receive the same colour. It follows that if we assign to each literal $x$ the value true if it is the label of white vertices and false it is the label of black vertices then we obtain a solution to Not-All-Equal 3SAT.
\end{proof}

We call the edges joining copies of $K$ \emph{identifying edges}.

In order to prove that Problem~\ref{pro:planarmatching} is NP-complete we still need to deal with edges that cross.
For this reason we define the \emph{uncrossing gadget} $U$ to be the graph depicted in Fig.~\ref{fig80}.
\begin{figure}[ht]
\psset{xunit=0.7cm,yunit=0.7cm,runit=0.7cm,labelsep=1.8pt}
\begin{center}
\begin{pspicture}(18,9)
\cnode*(4,0.5){0.1}{z4}
\uput[315](4,0.5){\small $z_4$}
\cnode*(4,1.5){0.1}{z3}
\uput[315](4,1.5){\small $z_3$}
\cnode*(3,2.5){0.1}{u}
\uput[225](3,2.5){\small $u$}
\cnode*(5,2.5){0.1}{v}
\uput[315](5,2.5){\small $x$}
\ncline{-}{z4}{z3}
\ncline{-}{z3}{u}
\ncline{-}{z3}{v}
\ncline{-}{u}{v}
\cnode*(3,3.5){0.1}{n}
\uput[225](3,3.5){\small $n$}
\cnode*(5,3.5){0.1}{q}
\uput[315](5,3.5){\small $q$}
\ncline{-}{u}{n}
\ncline{-}{q}{v}
\cnode*(2.5,4){0.1}{l}
\uput[225](2.5,4){\small $l$}
\cnode*(3.5,4){0.1}{m}
\uput[315](3.5,4){\small $m$}
\cnode*(5.5,4){0.1}{r}
\uput[315](5.5,4){\small $r$}
\cnode*(4.5,4){0.1}{p}
\uput[225](4.5,4){\small $p$}
\ncline{-}{n}{l}
\ncline{-}{n}{m}
\ncline{-}{p}{q}
\ncline{-}{r}{q}
\ncline{-}{p}{m}
\cnode*(1.5,5){0.1}{v}
\uput[225](1.5,5){\small $v$}
\ncline{-}{v}{l}
\cnode*(0.5,5){0.1}{w}
\uput[225](0.5,5){\small $w$}
\ncline{-}{v}{w}
\cnode*(3,4.5){0.1}{k}
\uput[135](3,4.5){\small $k$}
\cnode*(5,4.5){0.1}{o}
\uput[135](5,4.5){\small $o$}
\ncline{-}{k}{l}
\ncline{-}{k}{m}
\ncline{-}{p}{o}
\ncline{-}{r}{o}
\cnode*(3,5.5){0.1}{f}
\uput[315](3,5.5){\small $f$}
\cnode*(5,5.5){0.1}{i}
\uput[315](5,5.5){\small $i$}
\ncline{-}{f}{k}
\ncline{-}{o}{i}
\cnode*(2.5,6){0.1}{e}
\uput[135](2.5,6){\small $e$}
\cnode*(3.5,6){0.1}{d}
\uput[45](3.5,6){\small $d$}
\cnode*(5.5,6){0.1}{j}
\uput[45](5.5,6){\small $j$}
\cnode*(4.5,6){0.1}{h}
\uput[135](4.5,6){\small $h$}
\ncline{-}{f}{e}
\ncline{-}{f}{d}
\ncline{-}{h}{i}
\ncline{-}{j}{i}
\ncline{-}{h}{d}
\cnode*(3,6.5){0.1}{c}
\uput[135](3,6.5){\small $c$}
\cnode*(5,6.5){0.1}{g}
\uput[45](5,6.5){\small $g$}
\ncline{-}{c}{e}
\ncline{-}{c}{d}
\ncline{-}{h}{g}
\ncline{-}{j}{g}
\ncline{-}{e}{v}
\cnode*(4,7.5){0.1}{b}
\uput[45](4,7.5){\small $b$}
\cnode*(4,8.5){0.1}{a}
\rput[45](4,8.5){\small $a$}
\ncline{-}{c}{b}
\ncline{-}{a}{b}
\ncline{-}{b}{g}
\cnode*(6.5,6){0.1}{s}
\uput[45](6.5,6){\small $s$}
\cnode*(6.5,4){0.1}{t}
\uput[315](6.5,4){\small $t$}
\ncline{-}{s}{t}
\ncline{-}{s}{j}
\ncline{-}{t}{r}
\cnode*(7.5,5){0.1}{z1}
\uput[45](7.5,5){\small $z_1$}
\cnode*(8.5,5){0.1}{z2}
\uput[45](8.5,5){\small $z_2$}
\ncline{-}{z1}{z2}
\ncline{-}{s}{z1}
\ncline{-}{t}{z1}

\cnode*(13,0.5){0.1}{z4}
\uput[315](13,0.5){\small $\alpha$}
\cnode*(13,1.5){0.1}{z3}
\uput[315](13,1.5){\small $\alpha$}
\cnode*(12,2.5){0.1}{u}
\uput[225](12,2.5){\small $\alpha'$}
\cnode*(14,2.5){0.1}{v}
\uput[315](14,2.5){\small $\alpha$}
\ncline{-}{z4}{z3}
\ncline{-}{z3}{u}
\ncline{-}{z3}{v}
\ncline{-}{u}{v}
\cnode*(12,3.5){0.1}{n}
\uput[225](12,3.5){\small $\alpha$}
\cnode*(14,3.5){0.1}{q}
\uput[315](14,3.5){\small $\alpha$}
\ncline{-}{u}{n}
\ncline{-}{q}{v}
\cnode*(11.5,4){0.1}{l}
\uput[225](11.5,4){\small $\beta'$}
\cnode*(12.5,4){0.1}{m}
\uput[315](12.5,4){\small $\beta$}
\cnode*(14.5,4){0.1}{r}
\uput[315](14.5,4){\small $\beta$}
\cnode*(13.5,4){0.1}{p}
\uput[225](13.5,4){\small $\beta'$}
\ncline{-}{n}{l}
\ncline{-}{n}{m}
\ncline{-}{p}{q}
\ncline{-}{r}{q}
\ncline{-}{p}{m}
\cnode*(10.5,5){0.1}{v}
\uput[225](10.5,5){\small $\beta$}
\ncline{-}{v}{l}
\cnode*(9.5,5){0.1}{w}
\uput[225](9.5,5){\small $\beta$}
\ncline{-}{v}{w}
\cnode*(12,4.5){0.1}{k}
\uput[135](12,4.5){\small $\alpha'$}
\cnode*(14,4.5){0.1}{o}
\uput[135](14,4.5){\small $\alpha'$}
\ncline{-}{k}{l}
\ncline{-}{k}{m}
\ncline{-}{p}{o}
\ncline{-}{r}{o}
\cnode*(12,5.5){0.1}{f}
\uput[315](12,5.5){\small $\alpha$}
\cnode*(14,5.5){0.1}{i}
\uput[315](14,5.5){\small $\alpha$}
\ncline{-}{f}{k}
\ncline{-}{o}{i}
\cnode*(11.5,6){0.1}{e}
\uput[135](11.5,6){\small $\beta'$}
\cnode*(12.5,6){0.1}{d}
\uput[45](12.5,6){\small $\beta$}
\cnode*(14.5,6){0.1}{j}
\uput[45](14.5,6){\small $\beta'$}
\cnode*(13.5,6){0.1}{h}
\uput[135](13.5,6){\small $\beta$}
\ncline{-}{f}{e}
\ncline{-}{f}{d}
\ncline{-}{h}{i}
\ncline{-}{j}{i}
\ncline{-}{h}{d}
\cnode*(12,6.5){0.1}{c}
\uput[135](12,6.5){\small $\alpha'$}
\cnode*(14,6.5){0.1}{g}
\uput[45](14,6.5){\small $\alpha'$}
\ncline{-}{c}{e}
\ncline{-}{c}{d}
\ncline{-}{h}{g}
\ncline{-}{j}{g}

\ncline{-}{e}{v}
\cnode*(13,7.5){0.1}{b}
\uput[45](13,7.5){\small $\alpha$}
\cnode*(13,8.5){0.1}{a}
\rput[45](13,8.5){\small $\alpha$}
\ncline{-}{c}{b}
\ncline{-}{a}{b}
\ncline{-}{b}{g}

\cnode*(15.5,6){0.1}{s}
\uput[45](15.5,6){\small $\beta'$}
\cnode*(15.5,4){0.1}{t}
\uput[315](15.5,4){\small $\beta'$}
\ncline{-}{s}{t}
\ncline{-}{s}{j}
\ncline{-}{t}{r}
\cnode*(16.5,5){0.1}{z1}
\uput[45](16.5,5){\small $\beta$}
\cnode*(17.5,5){0.1}{z2}
\uput[45](17.5,5){\small $\beta$}
\ncline{-}{z1}{z2}
\ncline{-}{s}{z1}
\ncline{-}{t}{z1}
\end{pspicture}
\caption{Uncrossing gadget $U$ and its possible almost two-coloured perfect matchings where $\alpha,\beta \in \{\mbox{black},\mbox{white}\}$ and $\alpha',\beta'$ are the opposite colours to $\alpha$ and $\beta$, respectively.}\label{fig80}
\end{center}
\end{figure}
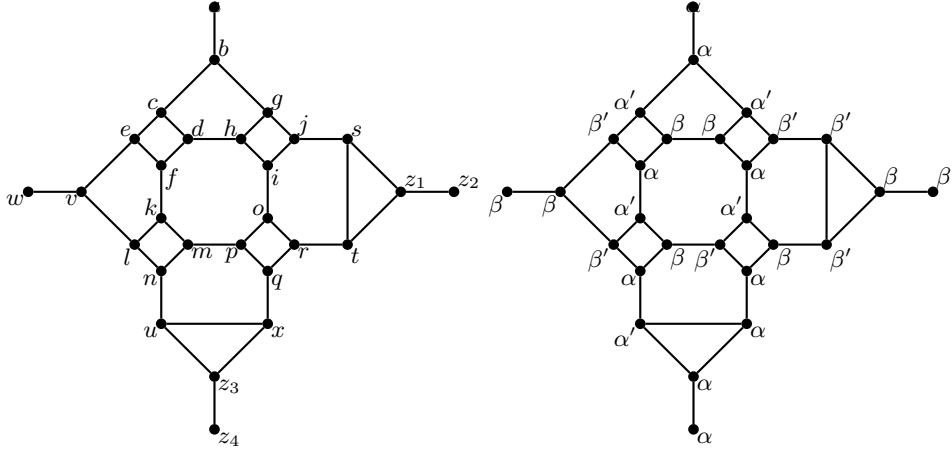

\begin{lemma}\label{le:uncrossgadget}
There exists an almost two-coloured perfect matching of $U$ if and only if $w,v,z_1,z_2$ have the same colour and $a,b,z_3,z_4$ have the same colour.
\end{lemma}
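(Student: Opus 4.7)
The plan is to combine Lemma~\ref{le:proph} with a direct analysis of the small triangles and pendant paths that make up $U$.

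First, I would locate a subgraph of $U$ isomorphic to $H$, or at least to a configuration sharing the conclusions of Lemma~\ref{le:proph}. Inspection of Fig.~\ref{fig80} suggests that the vertices in the central region of $U$, together with their incident edges, form such a subgraph with $a$ and $b$ playing their original roles. Applying Lemma~\ref{le:proph} immediately forces many colour equalities in the interior of $U$: several groups of vertices must share a common colour, and certain monochromatic edges are pinned down up to one binary choice.

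Second, I would propagate these interior constraints outward to the four terminal pairs $\{a,b\}$, $\{z_3,z_4\}$, $\{w,v\}$, $\{z_1,z_2\}$. Each terminal pair is connected to the central subgraph by a short chain consisting of a triangle and one or two pendant edges. For each such pendant substructure, the requirement that every vertex of degree at least two have exactly one neighbour of its own colour singles out which edge must be monochromatic, and this in turn forces the equality of colours within the terminal pair. The same argument, applied symmetrically on the opposite side of $U$, yields the two colour classes $\{w,v,z_1,z_2\}$ and $\{a,b,z_3,z_4\}$ claimed by the lemma, and the construction of $U$ leaves the two colours independent.

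For the converse direction, given a colouring of the terminal vertices satisfying the two equalities, the explicit colouring shown on the right-hand side of Fig.~\ref{fig80} (with $\alpha$ and $\beta$ replaced by the prescribed colours) provides the desired extension. Verifying that this is an almost two-coloured perfect matching reduces to checking, for every internal vertex of degree at least two, that exactly one of its neighbours shares its colour; the figure already displays the consistent labelling required for this check, so the verification is routine.

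The main obstacle will be the sheer amount of casework: $U$ has many vertices and several overlapping small cycles, so the propagation in the forward direction and the verification in the converse direction each require tracing through a moderately long chain of local constraints. Fortunately, $U$ enjoys a bilateral symmetry across its vertical axis, which noticeably reduces the number of essentially distinct cases one must handle, and the template provided by the right-hand side of Fig.~\ref{fig80} organises the bookkeeping throughout.
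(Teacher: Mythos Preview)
Your plan rests on finding a copy of the clause-gadget graph $H$ inside $U$ and invoking Lemma~\ref{le:proph}, but no such copy exists. In $H$ the vertex $b$ sits above two \emph{triangles} $\{c,e,f\}$ and $\{d,g,h\}$; in $U$, by contrast, $b$ sits above two \emph{four-cycles} $(c,d,f,e)$ and $(g,h,i,j)$, and the whole central block of $U$ is built from the four $4$-cycles $(c,d,f,e)$, $(g,h,i,j)$, $(k,l,n,m)$, $(o,p,q,r)$ linked by the bridges $dh$, $pm$, $fk$, $oi$ and the side paths through $v$ and through $s,t$. So the very first step --- ``apply Lemma~\ref{le:proph} to the central region with $a,b$ in their original roles'' --- does not get off the ground, and nothing you propagate afterwards is justified. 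The later talk of ``triangles and pendant edges'' connecting the centre to the terminal pairs is also off: only the two corner pieces $\{u,x,z_3\}$ and $\{s,t,z_1\}$ are triangles, and they are peripheral, not part of a central $H$-like block.

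The paper's argument is a direct local analysis that does not use Lemma~\ref{le:proph} at all. One first observes that in any almost two-coloured perfect matching a $4$-cycle must be coloured in one of two patterns: either two opposite edges are monochromatic (and the four outgoing edges are dichromatic), or no cycle edge is monochromatic (and all four outgoing edges are). One then shows by short contradictions that each of $ab$, $wv$, $z_1z_2$, $z_3z_4$ is monochromatic; for example, if $ab$ were dichromatic then exactly one of $bc,bg$ is monochromatic, which forces the adjacent $4$-cycle into the ``no monochromatic edge'' pattern and hence makes the other of $bc,bg$ monochromatic too --- a contradiction. Once these four pendant edges are monochromatic, every $4$-cycle is in the ``two opposite monochromatic edges'' pattern, opposite vertices of each $4$-cycle receive opposite colours, and chasing this around yields exactly the labelling in Fig.~\ref{fig80}. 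Your converse direction (verify the right-hand picture) is fine; it is the forward direction that needs to be rewritten along these lines.
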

\begin{proof}
Consider the four-cycle on the vertices $c,e,f,d$ in $U$. In a two-coloured perfect matching exactly two of the vertices $c,e,f,d$ must receive
the colour black and the other two must receive the colour white. There are two different ways of colouring them. The first way is that exactly two of the edges in the four-cycle are monochromatic, namely $ce$ and $df$, or $cd$ and $ef$. Then none of the edges $cb,dh,ev,fk$ can be monochromatic.
The other way is that none of the edges in the four-cycle is monochromatic and all of the edges $cb,dh,ev,fk$ are monochromatic.
The analogous thing is true for any four-cycle in $U$.

We now prove that in any almost two-coloured perfect matching the edges $ab,wv,z_1z_2$ and $z_3z_4$ are monochromatic.
Suppose $ab$ is dichromatic. Then precisely one of $bc$ and $bg$ must be monochromatic. Without loss of generality assume $bc$ is monochromatic.
Then the four-cycle on $c,e,f,d$ cannot have a monochromatic edge. It follows that $dh$ must be monochromatic. But then $bg$ must be monochromatic which
is a contradiction. Thus $ab$ and by symmetry $wv$ must be monochromatic.
Now suppose $z_1z_2$ is dichromatic. It follows that precisely one of $sz_1$ and $tz_1$ is monochromatic. Without loss of generality assume $tz_1$ is monochromatic. Then $js$ must be monochromatic. It follows that $io$ must be monochromatic and so must $rt$. This is not possible.
Thus $z_1z_2$ and due to symmetry $z_3z_4$ must be monochromatic.
Hence each of the four-cycles $(c,d,f,e),(g,h,i,j),(k,l,n,m),(o,p,q,r)$ contains exactly two monochromatic edges. So vertices that are opposite of each other in these four-cycles  receive opposite colours. It is now easy to see that the only possible colourings are as shown in Fig.~\ref{fig80} where $\alpha,\beta \in \{\mbox{black},\mbox{white}\}$ and $\alpha'$ denotes the
opposite colour to $\alpha$ and $\beta'$ denotes the opposite colour to $\beta$. The result then follows.
\end{proof}

We are now able to prove that Problem~\ref{pro:planarmatching} is NP-complete.
\begin{theorem}\label{th:cubplanp}
Problem \ref{pro:planarmatching} is NP-complete.
\end{theorem}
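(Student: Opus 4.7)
My plan is to reduce from Not-All-Equal 3SAT by applying the construction of Proposition~\ref{prop:1} to produce a cubic graph $G$, and then using the uncrossing gadget $U$ to eliminate the crossings in a suitable drawing of $G$. Membership in NP is clear since any proposed black-white colouring is verifiable in polynomial time by checking the matching condition at every vertex.

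Given a Not-All-Equal 3SAT instance, I would construct $G$ exactly as in Proposition~\ref{prop:1}. Because each clause gadget $K(C)$ is itself planar, I embed each copy in a disjoint bounded region of the plane, which guarantees (after generic perturbation) that the only crossings in the drawing are between identifying edges, and that no three edges meet at a common crossing. Then, for each such crossing between identifying edges $e_1 = u_1v_1$ and $e_2 = u_2v_2$, I would delete $e_1$ and $e_2$, insert a fresh copy of $U$ at the former crossing point, and identify $u_1$ with the degree-$1$ vertex $w$ of $U$, $v_1$ with $z_2$, $u_2$ with $a$, and $v_2$ with $z_4$. Because each $u_i, v_i$ becomes degree $2$ after edge deletion and each of the four terminals of $U$ has degree $1$, every merged vertex ends with degree exactly $3$, so the resulting graph $G'$ is planar and cubic and has size polynomial in the input.

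For correctness, the driving observation is that, by Lemma~\ref{le:colouringc}, every two-coloured perfect matching of $G$ makes all identifying edges monochromatic. Forward direction: given a satisfying Not-All-Equal 3SAT assignment, Proposition~\ref{prop:1} produces a two-coloured perfect matching of $G$ in which the colours of $u_1, v_1$ (and likewise $u_2, v_2$) agree at each former crossing; Lemma~\ref{le:uncrossgadget} then extends those boundary colours to an almost two-coloured perfect matching of each inserted copy of $U$, and gluing these gadget-colourings to the colouring of $G$ yields a two-coloured perfect matching of $G'$. Backward direction: given a two-coloured perfect matching of $G'$, Lemma~\ref{le:uncrossgadget} forces the boundary colours at each gadget to obey $u_1 = v_1$ and $u_2 = v_2$, so the restriction to $V(G)$ is a two-coloured perfect matching of $G$ in which every identifying edge is monochromatic, which by Proposition~\ref{prop:1} corresponds to a satisfying assignment.

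The main obstacle is checking that the matching condition is preserved at each merged vertex. At the merged vertex $u_1 = w$, Lemma~\ref{le:uncrossgadget} guarantees that its unique gadget-neighbour (the vertex $v$ of $U$ adjacent to $w$) shares the colour of $u_1$, providing exactly the one monochromatic neighbour required; the two surviving $G$-neighbours of $u_1$ remain dichromatic because $e_1$ was already $u_1$'s unique monochromatic neighbour in $G$. A remaining subtlety is the case where a single identifying edge participates in several crossings, which I would handle by subdividing the edge in advance into short pieces (one per crossing) so that each crossing can be treated independently while preserving cubicity and the matching property along the resulting replacement path.
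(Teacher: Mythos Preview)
Your approach coincides with the paper's: build the cubic graph of Proposition~\ref{prop:1}, draw it so that only identifying edges cross, and eliminate crossings with copies of the uncrossing gadget $U$, invoking Lemma~\ref{le:uncrossgadget} for correctness. Your check of the matching condition at the merged terminal vertices is right, and it matches the paper's reasoning that in any two-coloured perfect matching each of the pendant edges $wv,ab,z_1z_2,z_3z_4$ is monochromatic.

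The one genuine gap is your treatment of an identifying edge that participates in several crossings. The paper handles this iteratively: after inserting a copy of $U$ at one crossing, it declares the four pendant edges of that copy to be new identifying edges, redraws with one fewer crossing, and repeats; Lemma~\ref{le:uncrossgadget} then propagates the colour along the resulting chain of gadgets. Your proposed alternative, ``subdividing the edge in advance into short pieces'', is not specified and, taken literally, fails: a plain subdivision creates degree-$2$ vertices, so the graph is no longer cubic, and on a monochromatic identifying edge no colour for the subdivision vertex yields exactly one same-coloured neighbour. Any cubic ``replacement path'' you devise must additionally force its two ends to receive equal colours in every two-coloured perfect matching, so you would essentially be rebuilding the function of $U$. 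Replacing your subdivision remark by the paper's iterative replacement closes the gap without altering the rest of your argument.
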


\begin{proof}
Given an instance of Not-All-Equal 3SAT, construct the graph $G$ as in the proof of Proposition~\ref{prop:1}.
This graph can be drawn in the plane so that the only edges that cross are the identifying edges, each pair of identifying edges crosses at most once
and at most two edges cross at any point.

Now we replace the crossings one by one by replacing a pair of crossing edges by the uncrossing gadget. Suppose $\gamma$ and $\delta$ are two edges that cross. We delete $\gamma$ and $\delta$ and replace them with a copy of the uncrossing gadget attaching $w$ and $z_2$ to the end-vertices of $\gamma$, and $a$ and $z_4$ to the end-vertices of $\delta$. We will also call the four pendant edges $wv,ab,z_1z_2,z_3z_4$ in the uncrossing graph identifying edges. After each replacement we can draw the graph so that only identifying edges cross and such that there is one fewer crossing.
We continue until there are no more crossing edges. The final graph can be constructed in polynomial time and is planar and cubic.
Each original identifying edge in $G$ now corresponds to one or more identifying edges with each consecutive pair being on opposite sides of a copy of the uncrossing gadget. Lemma~\ref{le:uncrossgadget} shows that in a two-coloured perfect matching all of these edges must be monochromatic and all the end-vertices of these edges have the same colour.

Now the argument in Proposition~\ref{prop:1} shows that the final graph has a two-coloured perfect matching if and only if the instance of Not-All-Equal 3SAT is satisfiable.
\end{proof}

\section{$k$-$L(2,1)$-labelling for $k\ge4$ fixed}\label{sec:constgraph}
Let $G$ be a planar cubic graph. In order to establish our main result we will reduce Planar Cubic Two-Colourable Perfect Matching to Planar $k$-$L(2,1)$-Labelling for
planar graphs for each $k \ge 4$.
From any planar cubic graph $G$ forming an instance of Planar Cubic Two-Colourable Perfect Matching, we construct a graph $K$. As we see in the next section,
the basic form of $K$ does not depend on $k$ but is formed by constructing an auxiliary graph $H$ and then replacing each edge of $H$ by a gadget which does depend on $k$. In this section we define these gadgets and analyse certain $L(2,1)$-labellings of them. Each gadget has two distinguished vertices, which will always be labelled $u$ and $v$, corresponding to the end-vertices of the edge that is replaced in the auxiliary graph defined in the next section.
These two vertices have degree $k-1$ in $K$. Any vertex of degree $k-1$ must receive either label $0$ or $k$ in a $k$-$L(2,1)$-labelling
because these are the only possible labels for which there are $k-1$ labels remaining to label the neighbours of that vertex, so we will analyse the $k$-$L(2,1)$-labellings of these gadgets in which $u,v$ receive label $0$ or $k$.

\subsection{$\lambda_{2,1}(G) = 4$}
In this subsection the gadget $G_{4}$ is simply a path of length three.
More precisely the gadget $G_{4}$ is given by
$V(G_{4})=\{ u,a_u,a_v,v\}$ and $E(G_{4})=\{ua_u,a_ua_v,a_vv\}$.
This gadget is used in~\cite{fiala+kloks+kratochvil:fixed-parameter-lambda}, from where we get the following lemma.
\begin{lemma}\label{le:possib2}
There is a $4$-$L(2,1)$-labelling $L$ of $G_4$ with $L(u),L(v) \in \{0,4\}$ if and only if the following conditions are satisfied.
\begin{enumerate}
\item If $(L(u),L(v))=(0,0)$, then $(L(a_u),L(a_v)) \in \{(2,4),(4,2)\}$.
\item If $(L(u),L(v))=(4,4)$, then $(L(a_u),L(a_v)) \in \{(0,2),(2,0)\}$.
\item If $(L(u),L(v))=(4,0)$, then $(L(a_u),L(a_v)) =(1,3)$.
\item If $(L(u),L(v))=(0,4)$, then $(L(a_u),L(a_v)) =(3,1)$.
\end{enumerate}
\end{lemma}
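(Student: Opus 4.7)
The plan is to prove the lemma by a direct case analysis on the four possibilities for the pair $(L(u),L(v))$. Since $G_4$ is just the path $u\text{-}a_u\text{-}a_v\text{-}v$ of length three, the $L(2,1)$-labelling constraints amount to only five conditions: the three adjacency constraints $|L(u)-L(a_u)|\ge 2$, $|L(a_u)-L(a_v)|\ge 2$, $|L(a_v)-L(v)|\ge 2$, and the two distance-two constraints $L(u)\ne L(a_v)$ and $L(a_u)\ne L(v)$. No constraint is imposed between $L(u)$ and $L(v)$ directly, since they sit at distance three in $G_4$.

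With these five conditions in hand, I would treat each of the four possible values of $(L(u),L(v))\in\{0,4\}^2$ in turn and simply list the admissible values of $L(a_u)$ and $L(a_v)$. For $(L(u),L(v))=(0,0)$, the adjacency constraints force $L(a_u),L(a_v)\in\{2,3,4\}$, and requiring $|L(a_u)-L(a_v)|\ge 2$ eliminates every pair except $(2,4)$ and $(4,2)$; the case $(L(u),L(v))=(4,4)$ is symmetric and gives $\{(0,2),(2,0)\}$. For $(L(u),L(v))=(4,0)$, the adjacency constraint at $u$ gives $L(a_u)\in\{0,1,2\}$, the distance-two constraint $L(a_u)\ne L(v)=0$ trims this to $\{1,2\}$, and analogously $L(a_v)\in\{2,3\}$; then $|L(a_u)-L(a_v)|\ge 2$ leaves only $(1,3)$. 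The case $(L(u),L(v))=(0,4)$ is handled by the same reasoning, yielding $(3,1)$.

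Finally, one should verify that each of the listed labellings is indeed a valid $4$-$L(2,1)$-labelling, which is a routine check of the five conditions in each case. There is no real obstacle here: the lemma is a bookkeeping statement whose proof is entirely an enumeration of a handful of small integer solutions, and I would simply present the argument as the four short case checks above, perhaps in a compact table for readability.
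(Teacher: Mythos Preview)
Your proof is correct: the five constraints you list are exactly the $L(2,1)$-conditions on the path $u\,a_u\,a_v\,v$, and your case analysis in each of the four subcases is accurate, including the trimming by the distance-two constraints in cases~(iii) and~(iv).

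There is nothing to compare against in the paper itself: the authors do not prove this lemma but simply import it from~\cite{fiala+kloks+kratochvil:fixed-parameter-lambda}. Your direct enumeration is the natural (and essentially the only) way to establish the statement, and it would serve perfectly well as a self-contained replacement for the citation.
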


\subsection{$\lambda_{2,1}(G) = 5$}

\begin{figure}[ht]
\psset{xunit=1cm,yunit=1cm,runit=1cm,labelsep=4pt}
\begin{center}
\begin{pspicture}(4,4)
\cnode*(0.5,3.5){0.075}{u}
\uput[90](0.5,3.5){$u$}
\cnode*(3.5,3.5){0.075}{v}
\uput[90](3.5,3.5){$v$}
\ncline{-}{u}{v}
\cnode*(1.5,3.5){0.075}{av}
\uput[90](1.5,3.5){$a_u$}
\cnode*(2.5,3.5){0.075}{au}
\uput[90](2.5,3.5){$a_v$}
\cnode*(2.5,3){0.075}{b1}
\uput[0](2.5,3){$b_v$}
\ncline{-}{b1}{au}
\cnode*(1.5,3){0.075}{b2}
\uput[180](1.5,3){$b_u$}
\ncline{-}{b2}{av}

\cnode*(2,2.5){0.075}{b3}
\uput[90](2,2.5){$c$}
\ncline{-}{b1}{b3}
\ncline{-}{b2}{b3}
\cnode*(2,2){0.075}{c3}
\uput[160](2,2){$d$}
\ncline{-}{c3}{b3}
\cnode*(2.5,2){0.075}{c4}
\uput[0](2.5,2){$e_3$}
\ncline{-}{c3}{c4}
\cnode*(2.5,1.5){0.075}{d6}
\uput[0](2.5,1.5){$e_1$}
\ncline{-}{c3}{d6}
\cnode*(1.5,1.5){0.075}{d4}
\uput[180](1.5,1.5){$e_2$}
\ncline{-}{c3}{d4}
\cnode*(2,1){0.075}{e2}
\uput[190](2,1){$f$}
\ncline{-}{e2}{d4}
\ncline{-}{e2}{d6}
\ncline{-}{c3}{d4}
\cnode*(1.5,0.5){0.075}{f3}
\uput[270](1.5,0.5){$g_1$}
\ncline{-}{f3}{e2}
\cnode*(2.5,0.5){0.075}{f4}
\uput[270](2.5,0.5){$g_2$}
\ncline{-}{f4}{e2}
\end{pspicture}
\end{center}
\caption{The edge gadget $G_{5}$.}\label{fig16}
\end{figure}
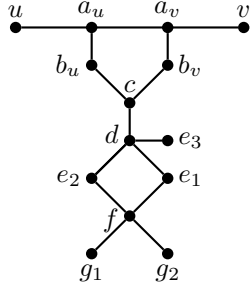
Let $G_{5}$ be the graph depicted in Fig.~\ref{fig16}.

\begin{lemma}\label{possib}
There is a $5$-$L(2,1)$-labelling $L$ of $G_5$ with $L(u),L(v) \in \{0,5\}$ if and only if the following conditions are satisfied.
\begin{enumerate}
\item If $(L(u),L(v))=(0,0)$, then $(L(a_u), L(a_v))\in\{(2,5), (5,2), (3,5), (5,3)\}$.
\item If $(L(u),L(v))=(5,5)$, then $(L(a_u), L(a_v))\in\{(3,0), (0,3), (2,0), (0,2)\}$.
\item If $(L(u),L(v))=(0,5)$, then  $(L(a_u), L(a_v))=(4,1)$.
\item If $(L(u),L(v))=(5,0)$, then  $(L(a_u), L(a_v))=(1,4)$.
\end{enumerate}
\end{lemma}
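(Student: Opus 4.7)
The plan is to exploit the two degree-four vertices $d$ and $f$ of $G_{5}$ to pin down the labels in the centre of the gadget, and then propagate the constraints outward to $a_{u}$ and $a_{v}$. The first observation is that any vertex of degree four in a $5$-$L(2,1)$-labelling must receive label $0$ or $5$: its four neighbours lie pairwise at distance two, so they need four distinct labels each differing by at least $2$ from the centre, and for any central label other than $0$ or $5$ at most three such labels are available. Applied to $d$ and $f$, this gives $L(d),L(f)\in\{0,5\}$, and the sets $\{L(c),L(e_{1}),L(e_{2}),L(e_{3})\}$ and $\{L(e_{1}),L(e_{2}),L(g_{1}),L(g_{2})\}$ are each determined as a set (up to permutation) by the value of $L(d)$ and $L(f)$ respectively.

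For the necessity direction, I would split into the four cases $(L(d),L(f))\in\{0,5\}^{2}$, reduced to two by the global symmetry $x\mapsto 5-x$. In each subcase the labels of $c, e_{1}, e_{2}, e_{3}, g_{1}, g_{2}$ are strongly constrained; for instance when $L(d)=0$ and $L(f)=5$, the common neighbours $e_{1}$ and $e_{2}$ must lie in $\{2,3,4,5\}\cap\{0,1,2,3\}=\{2,3\}$, which forces $\{L(c),L(e_{3})\}=\{4,5\}$. I would then propagate outward, using that $L(b_{u})\neq L(d)$ and $L(b_{v})\neq L(d)$ (via $c$) together with $|L(c)-L(b_{u})|\geq 2$ and $|L(c)-L(b_{v})|\geq 2$ to locate $L(b_{u})$ and $L(b_{v})$, and finally using $|L(a_{u})-L(b_{u})|\geq 2$, $L(a_{u})\neq L(c)$ (via $b_{u}$), $L(a_{u})\neq L(v)$ (via $a_{v}$), $|L(a_{u})-L(a_{v})|\geq 2$, together with the symmetric conditions for $a_{v}$, to read off the allowed pairs $(L(a_{u}),L(a_{v}))$ compatible with each prescribed $(L(u),L(v))$.

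For sufficiency, for each pair listed in the lemma I would exhibit an explicit completion: having fixed $(L(u),L(v),L(a_{u}),L(a_{v}))$, choose compatible values of $L(d)$ and $L(f)$ in $\{0,5\}$ and then fill in $L(b_{u}),L(b_{v}),L(c),L(e_{1}),L(e_{2}),L(e_{3}),L(g_{1}),L(g_{2})$ using the freedom identified in the case analysis. The main obstacle will be the bookkeeping in the necessity direction: candidate pairs such as $(L(a_{u}),L(a_{v}))=(2,4)$ when $(L(u),L(v))=(0,0)$ look locally feasible, and are excluded only by tracing their implications through $b_{u},b_{v}$ down to $c$ and its neighbourhood in the core of $G_{5}$. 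Exploiting the $u\leftrightarrow v$ symmetry of $G_{5}$ together with the complementation $x\mapsto 5-x$ will halve the bookkeeping, but the essential difficulty is to organise the elimination of all such almost-feasible pairs without overlooking a case.
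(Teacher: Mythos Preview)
Your approach is correct and uses the same two pillars as the paper --- the observation that the degree-four vertices $d$ and $f$ must receive labels in $\{0,5\}$, and the complementation symmetry $x\mapsto 5-x$ --- but you organise the argument in the opposite direction. The paper works \emph{outside-in}: from the path $u\,a_u\,a_v\,v$ alone it lists the handful of candidate pairs $(L(a_u),L(a_v))$ (for instance, six pairs when $(L(u),L(v))=(0,0)$, and only three when $(L(u),L(v))=(0,5)$), and then for each bad candidate traces inward through $b_u,b_v,c$ to $d,f$ to reach a contradiction at $c$. You instead work \emph{inside-out}, fixing $(L(d),L(f))$ first, determining the core labels, and propagating outward through $c,b_u,b_v$ to $a_u,a_v$. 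Both routes land on the same eliminations (your singled-out case $(2,4)$ is exactly the one the paper rules out), but the outside-in ordering is a little more economical: the path constraints already prune the candidate list to very few pairs, so only one or two inward chases are needed per case, whereas your outward propagation branches on $L(c)\in\{4,5\}$ and then on $\{L(b_u),L(b_v)\}$ before meeting the boundary data. One small slip: since $\dist(d,f)=2$ via $e_1$, the pair $(L(d),L(f))$ cannot be $(0,0)$ or $(5,5)$, so you really have two cases, not four, and the symmetry $x\mapsto 5-x$ reduces them to one.
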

\begin{proof}\mbox{}
\begin{enumerate}
\item By the definition of $L(2,1)$-labelling, both $L(a_u)$ and $L(a_v)$ belong to the set $\{2,3,4,5\}$.
As $|L(a_u)-L(a_v)|\geq 2$, \[(L(a_u), L(a_v))\in\{(2,4), (4,2), (2,5), (5,2), (3,5), (5,3)\}.\]

Suppose for a contradiction that $(L(a_u), L(a_v))\in\{(2,4), (4,2)\}$. By symmetry, we may assume that
 $(L(a_u), L(a_v))=(2,4)$. Then $L(b_u)=5$ and $L(b_v)=1$.
The vertices $d$ and $f$ have degree four and thus must receive labels from $\{0,5\}$. Because $\dist(b_u,d)=2$ we must have $L(d)=0$ and because $\dist(d,f)=2$ we must have $L(f)=5$.
This implies that $\{L(e_1), L(e_2)\}=\{2,3\}$.
But then vertex $c$ cannot be labelled, giving a contradiction.
An $L(2,1)$-labelling is obtained if $(L(a_u),L(a_v))=(5,2)$ and $L(b_u)=3$, $L(b_v)=4$, $L(c)=0$, $L(d)=5$,  $L(e_3)=1$, $L(e_2)=2$, $L(e_1)=3$, $L(f)=0$
$L(g_1)=5$ and $L(g_2)=4$ or if $(L(a_u),L(a_v))=(5,3)$ and $L(b_u)=2$, $L(b_v)=1$,  $L(c)=4$, $L(d)=0$, $L(e_3)=5$, $L(e_2)=2$, $L(e_1)=3$, $L(f)=5$
$L(g_1)=0$ and $L(g_2)=1$. The other cases follow by symmetry.

\item Analogous to (i).

\item By the definition of $L(2,1)$-labelling, $L(a_u)\in \{2,3,4\}$ and $L(a_v)\in \{1,2,3\}$.
As $|L(a_u)-L(a_v)|\geq 2$, $(L(a_u), L(a_v))\in\{(4,1), (4,2), (3,1)\}$.
Suppose for a contradiction that  $(L(a_u), L(a_v))\neq (4,1)$.
By the label symmetry  $x\mapsto 5-x$, we may assume that  $(L(a_u), L(a_v))= (4,2)$.
Hence $L(b_u)=1$ and $L(b_v)=0$. Now the vertices $d$ and $f$ have degree four and thus
$L(d)=0,L(f)=5$ or $L(d)=5,L(f)=0$. As $\dist(b_v,d)=2$, $L(d)=5$ and as $\dist(d,f)=2$, $L(f)=0$.
Now $\{L(e_1), L(e_2)\}=\{2,3\}$. But then vertex $c$ cannot be labelled, a contradiction.
An $L(2,1)$-labelling is obtained if $(L(a_u),L(a_v))=(4,1)$ and $L(b_u)=2$, $L(b_v)=3$, $L(c)=0$, $L(d)=5$,  $L(e_3)=1$, $L(e_2)=2$, $L(e_1)=3$, $L(f)=0$
$L(g_1)=5$ and $L(g_2)=4$.

\item Analogous to (iii).
\end{enumerate}
\end{proof}

\subsection{$\lambda_{2,1}(G) \geq 6$}
In this subsection we introduce for any $k \ge 6$ the gadget $G_k$ and consider some of its $L(2,1)$-labellings.
The gadgets $G_6$ and $G_7$ are depicted in Fig.~\ref{fig21} and in Fig.~\ref{fig50}, respectively.
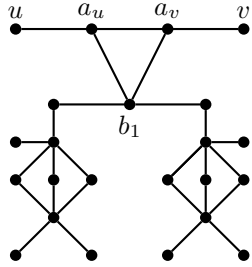
\begin{figure}[ht]
\psset{xunit=1cm,yunit=1cm,runit=1cm,labelsep=4pt}
\begin{center}
\begin{pspicture}(4,4)
\cnode*(0.5,3.5){0.075}{u}
\uput[90](0.5,3.5){$u$}
\cnode*(3.5,3.5){0.075}{v}
\uput[90](3.5,3.5){$v$}
\ncline{-}{u}{v}
\cnode*(1.5,3.5){0.075}{av}
\uput[90](1.5,3.5){$a_u$}
\cnode*(2.5,3.5){0.075}{au}
\uput[90](2.5,3.5){$a_v$}
\cnode*(2,2.5){0.075}{b1}
\uput[270](2,2.5){$b_1$}
\ncline{-}{b1}{av}
\ncline{-}{b1}{au}
\cnode*(1,2.5){0.075}{b2}
\ncline{-}{b1}{b2}
\cnode*(1,2){0.075}{c2}
\ncline{-}{b2}{c2}
\cnode*(0.5,2){0.075}{c1}
\ncline{-}{c1}{c2}
\cnode*(0.5,1.5){0.075}{d1}
\ncline{-}{d1}{c2}
\cnode*(1,1.5){0.075}{d2}
\ncline{-}{d2}{c2}
\cnode*(1.5,1.5){0.075}{d3}
\ncline{-}{d3}{c2}
\cnode*(1,1){0.075}{e1}
\ncline{-}{d1}{e1}
\ncline{-}{d2}{e1}
\ncline{-}{d3}{e1}
\cnode*(0.5,0.5){0.075}{f1}
\ncline{-}{f1}{e1}
\cnode*(1.5,0.5){0.075}{f1}
\ncline{-}{f1}{e1}

\cnode*(3,2.5){0.075}{b3}
\ncline{-}{b1}{b3}
\cnode*(3,2){0.075}{c3}
\ncline{-}{c3}{b3}
\cnode*(3.5,2){0.075}{c4}
\ncline{-}{c3}{c4}
\cnode*(3.5,1.5){0.075}{d6}
\ncline{-}{c3}{d6}
\cnode*(3,1.5){0.075}{d5}
\ncline{-}{c3}{d5}
\cnode*(2.5,1.5){0.075}{d4}
\ncline{-}{c3}{d4}
\cnode*(3,1){0.075}{e2}
\ncline{-}{e2}{d5}
\ncline{-}{e2}{d4}
\ncline{-}{e2}{d6}
\ncline{-}{c3}{d4}
\cnode*(2.5,0.5){0.075}{f3}
\ncline{-}{f3}{e2}
\cnode*(3.5,0.5){0.075}{f4}
\ncline{-}{f4}{e2}
\end{pspicture}
\end{center}
\caption{The edge gadget $G_{6}$.}\label{fig21}
\end{figure}

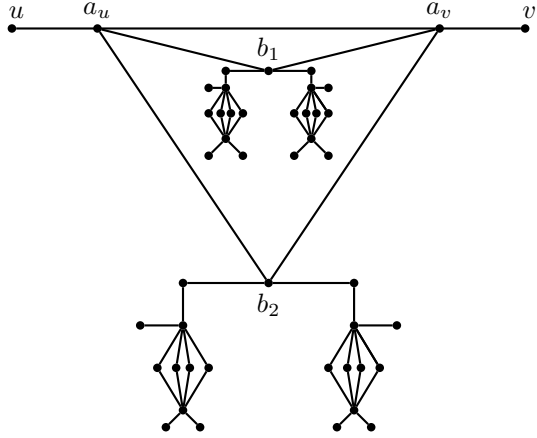
\begin{figure}[ht]
\psset{xunit=2.25cm,yunit=2.25cm,runit=2.25cm,labelsep=4pt}
\begin{center}
\begin{pspicture}(4,3)
\cnode*(0.5,2.5){0.025}{u}
\uput[90](0.5,2.5){ $u$}
\cnode*(3.5,2.5){0.025}{v}
\uput[90](3.5,2.5){ $v$}
\ncline{-}{u}{v}
\cnode*(1,2.5){0.025}{av}
\uput[90](1,2.5){$a_u$}
\cnode*(3,2.5){0.025}{au}
\uput[90](3,2.5){$a_v$}
\cnode*(2,1){0.025}{b1}
\uput[270](2,1){$b_2$}
\ncline{-}{b1}{av}
\ncline{-}{b1}{au}
\cnode*(1.5,1){0.025}{b2}
\ncline{-}{b1}{b2}
\cnode*(1.5,0.75){0.025}{c2}
\ncline{-}{b2}{c2}
\cnode*(1.25,0.75){0.025}{c1}
\ncline{-}{c1}{c2}
\cnode*(1.35,0.5){0.025}{d1}
\ncline{-}{d1}{c2}
\cnode*(1.46,0.5){0.025}{d2}
\cnode*(1.54,0.5){0.025}{d2x}
\ncline{-}{d2}{c2}
\ncline{-}{d2x}{c2}
\cnode*(1.65,0.5){0.025}{d3}
\ncline{-}{d3}{c2}
\cnode*(1.5,0.25){0.025}{e1}
\ncline{-}{d1}{e1}
\ncline{-}{d2}{e1}
\ncline{-}{d2x}{e1}
\ncline{-}{d3}{e1}
\cnode*(1.4,0.15){0.025}{f1}
\ncline{-}{f1}{e1}
\cnode*(1.6,0.15){0.025}{f1}
\ncline{-}{f1}{e1}

\cnode*(2.5,1){0.025}{b3}
\ncline{-}{b1}{b3}
\cnode*(2.5,0.75){0.025}{c3}
\ncline{-}{c3}{b3}
\cnode*(2.75,0.75){0.025}{c4}
\ncline{-}{c3}{c4}
\cnode*(2.35,0.5){0.025}{d6}
\ncline{-}{c3}{d6}
\cnode*(2.46,0.5){0.025}{d5}
\cnode*(2.54,0.5){0.025}{d5x}
\ncline{-}{c3}{d5}
\ncline{-}{c3}{d5x}
\cnode*(2.65,0.5){0.025}{d4}
\ncline{-}{c3}{d4}
\cnode*(2.5,0.25){0.025}{e2}
\ncline{-}{e2}{d5}
\ncline{-}{e2}{d5x}
\ncline{-}{e2}{d4}
\ncline{-}{e2}{d6}
\ncline{-}{c3}{d4}
\cnode*(2.4,0.15){0.025}{f3}
\ncline{-}{f3}{e2}
\cnode*(2.6,0.15){0.025}{f4}
\ncline{-}{f4}{e2}
\cnode*(2,2.25){0.025}{b11}
\uput[90](2,2.25){$b_1$}
\ncline{-}{b11}{av}
\ncline{-}{b11}{au}
\cnode*(1.75,2.25){0.025}{b21}
\ncline{-}{b11}{b21}
\cnode*(1.75,2.15){0.025}{c21}
\ncline{-}{b21}{c21}
\cnode*(1.65,2.15){0.025}{c11}
\ncline{-}{c11}{c21}
\cnode*(1.65,2){0.025}{d11}
\ncline{-}{d11}{c21}
\cnode*(1.72,2){0.025}{d21}
\cnode*(1.78,2){0.025}{d212}
\ncline{-}{d21}{c21}
\ncline{-}{d212}{c21}
\cnode*(1.85,2){0.025}{d31}
\ncline{-}{d31}{c21}
\cnode*(1.75,1.85){0.025}{e11}
\ncline{-}{d11}{e11}
\ncline{-}{d21}{e11}
\ncline{-}{d31}{e11}
\ncline{-}{d212}{e11}
\cnode*(1.65,1.75){0.025}{f11}
\ncline{-}{f11}{e11}
\cnode*(1.85,1.75){0.025}{f11}
\ncline{-}{f11}{e11}

\cnode*(2.25,2.25){0.025}{b31}
\ncline{-}{b11}{b31}
\cnode*(2.25,2.15){0.025}{c31}
\ncline{-}{c31}{b31}
\cnode*(2.35,2.15){0.025}{c41}
\ncline{-}{c31}{c41}
\cnode*(2.15,2){0.025}{d61}
\ncline{-}{c31}{d61}
\cnode*(2.22,2){0.025}{d51}
\cnode*(2.28,2){0.025}{d512}
\ncline{-}{c31}{d51}
\ncline{-}{c31}{d512}
\cnode*(2.35,2){0.025}{d41}
\ncline{-}{c31}{d41}
\cnode*(2.25,1.85){0.025}{e21}
\ncline{-}{e21}{d51}
\ncline{-}{e21}{d41}
\ncline{-}{e21}{d61}
\ncline{-}{c31}{d41}
\ncline{-}{e21}{d512}
\cnode*(2.15,1.75){0.025}{f31}
\ncline{-}{f31}{e21}
\cnode*(2.35,1.75){0.025}{f41}
\ncline{-}{f41}{e21}
\end{pspicture}
\end{center}
\caption{The edge gadget $G_7$.}\label{fig50}
\end{figure}

Let $H'$ be defined as follows, see Fig.~\ref{fig22}.
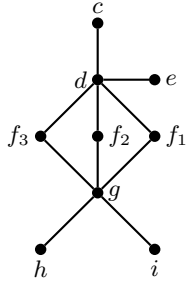
\begin{figure}
\psset{xunit=1cm,yunit=1cm,runit=1cm,labelsep=4pt}
\begin{center}
\begin{pspicture}(0,1)(4,4.75)
\cnode*(2,4.5){0.075}{c}
\uput[90](2,4.5){\small $c$}
\cnode*(2,3.75){0.075}{d}
\uput[180](2,3.75){\small $d$}
\ncline{-}{c}{d}
\cnode*(2.75,3.75){0.075}{e}
\uput[0](2.75,3.75){\small $e$}
\ncline{-}{d}{e}
\cnode*(2.75,3){0.075}{f1}
\uput[0](2.75,3){\small $f_{1}$}
\ncline{-}{f1}{d}
\cnode*(2,3){0.075}{f2}
\uput[0](2,3){\small $f_{2}$}
\ncline{-}{f2}{d}
\cnode*(1.25,3){0.075}{f3}
\uput[180](1.25,3){\small $f_{3}$}
\ncline{-}{f3}{d}
\cnode*(2,2.25){0.075}{g}
\uput[0](2,2.25){\small $g$}
\ncline{-}{g}{f1}
\ncline{-}{g}{f2}
\ncline{-}{f3}{g}
\cnode*(1.25,1.5){0.075}{h}
\uput[270](1.25,1.5){\small $h$}
\ncline{-}{h}{g}
\cnode*(2.75,1.5){0.075}{i}
\uput[270](2.75,1.5){\small $i$}
\ncline{-}{i}{g}
\end{pspicture}
\end{center}
\caption{The graph $H'$ for $k=6$.}\label{fig22}
\end{figure}

\begin{align*}
V(H')=\{ c,d,e,f_1,...,f_{k-3},g,h,i \}
\end{align*}
and
\begin{align*}
E(H')= \{ &cd,de,hg,gi,gf_1,...,gf_{k-3},df_{1},...,df_{k-3}  \}.
\end{align*}

\begin{lemma}\label{le:labelhn}
For any $k \ge 6$ the graph $H'$ is planar and there exists an $L(2,1)$-labelling $L$ of $H'$ with span $k$ if and only if
\begin{align*}
(L(c),L(d)) \in \{(0,k),(1,k),(k-1,0),(k,0) \}.
\end{align*}
\end{lemma}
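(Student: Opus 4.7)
The planarity of $H'$ is immediate, since the graph consists of the bipartite subgraph $K_{2,k-3}$ between $\{d,g\}$ and $\{f_1,\dots,f_{k-3}\}$ (planar for every value of $k$), together with the pendants $c,e$ attached to $d$ and $h,i$ attached to $g$; any planar embedding of $K_{2,k-3}$ extends trivially. So the content of the lemma is the labelling statement.

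I plan a forcing argument driven by the two vertices of degree $k-1$. Both $d$ and $g$ have degree exactly $k-1$ (each is joined to the $k-3$ vertices $f_j$ plus two further vertices), so by the observation used throughout the section their labels lie in $\{0,k\}$. Since each $f_j$ makes $d$ and $g$ lie at distance $2$, we must have $L(d)\neq L(g)$, hence $\{L(d),L(g)\}=\{0,k\}$. By the label symmetry $x\mapsto k-x$ I may treat the case $L(d)=0$, $L(g)=k$ and transfer the conclusion to the other case at the end. In this case every $f_j$ is adjacent to both $d$ and $g$, so $L(f_j)\in\{2,3,\dots,k-2\}$; moreover the $f_j$'s are pairwise at distance $2$ (via $d$), so their labels are distinct. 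Since there are exactly $k-3$ vertices $f_j$ and $|\{2,\dots,k-2\}|=k-3$, the labels of the $f_j$'s are exactly $\{2,3,\dots,k-2\}$.

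Now $c$ and $e$ are the two remaining neighbours of $d$; they are at distance $2$ from every $f_j$ (via $d$) and from one another (via $d$), and each is adjacent to $d$ itself, so their labels must lie in $\{0,1,\dots,k\}\setminus(\{0,1\}\cup\{2,\dots,k-2\})=\{k-1,k\}$ and must be distinct. Therefore $\{L(c),L(e)\}=\{k-1,k\}$, and in particular $L(c)\in\{k-1,k\}$. Combined with $L(d)=0$ this gives the two pairs $(k-1,0)$ and $(k,0)$; applying $x\mapsto k-x$ handles the case $L(d)=k$, producing the pairs $(1,k)$ and $(0,k)$. This establishes the ``only if'' direction.

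For the converse I exhibit an explicit labelling realising each of the four allowed pairs. Concretely, for $L(d)=0$, $L(g)=k$ I set $L(f_j)=j+1$ for $j=1,\dots,k-3$, $\{L(h),L(i)\}=\{0,1\}$, and choose $\{L(c),L(e)\}=\{k-1,k\}$ so that $L(c)$ takes whichever of $k-1,k$ is desired. A routine check goes through each edge and each pair at distance $2$ using the disjoint colour-pools $\{0,1\}$ for $h,i$, $\{2,\dots,k-2\}$ for the $f_j$'s, and $\{k-1,k\}$ for $c,e$, confirming that all $L(2,1)$-constraints are met. The remaining two pairs are obtained from these by the symmetry $x\mapsto k-x$. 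The argument is essentially a forced-labelling/counting computation, so no single step should be hard; the only place that needs care is verifying the equality $|\{2,\dots,k-2\}|=k-3$ which pins down the labels of the $f_j$'s and thereby forces $\{L(c),L(e)\}=\{k-1,k\}$.
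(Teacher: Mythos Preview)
Your proof is correct and follows essentially the same approach as the paper's: both use that $d$ and $g$ have degree $k-1$ and are at distance~$2$ to force $\{L(d),L(g)\}=\{0,k\}$, then pin down the $f_j$'s as a bijection onto $\{2,\dots,k-2\}$, which in turn forces $\{L(c),L(e)\}=\{k-1,k\}$ (and symmetrically $\{L(h),L(i)\}=\{0,1\}$). Your write-up is in fact more complete than the paper's, since you explicitly verify the ``if'' direction with a concrete labelling, whereas the paper leaves this implicit.
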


\begin{proof}
As seen from Fig.~\ref{fig22}, $H'$ is planar for $k=6$. For any higher $k$ we need to connect $k-6$ paths of length two at $d$ and $g$
to the graph $H'$ in Fig.~\ref{fig22}. So $H'$ is planar for any $k \ge 6$.

If $L$ is a $k$-$L(2,1)$-labelling of $H'$ then $\{L(g),L(d)\} = \{0,k\}$ because $g$ and $d$ have degree $k-1$ and $\dist(g,d)=2$.
It follows that $\{f_{1},...,f_{k-3}\} = \{ 2,...,k-2 \}$.
If $L(d)=0$ and $L(g)=k$, then $\{ L(h),L(i) \}= \{ 1,0 \}$ and $\{L(c),L(e)\}=\{k-1,k\}$. Similarly if $L(d)=k$ and $L(g)=0$, then $\{ L(h),L(i) \}= \{ k-1,k \}$ and $\{L(c),L(e)\}=\{0,1\}$.
\end{proof}

We now define the graph $G_k$ for $k \ge 6$. Take a path of length three with vertices $u,u_a,a_v,v$ and edges $ua_u, a_ua_v,a_vv$. Add vertices $b_1,...,b_{k-5}$, with each joined to $a_u$ and $a_v$. Now for each $i=1,...,k-5$, add two copies of $H'$ with the vertex labelled $c$ in each copy joined to $b_i$. So each of $b_1,...,b_{k-5}$ has degree four.
To refer to the vertices in the two copies of $H'$ attached to $b_i$, we add a subscript of $(l,i)$ to the name of the vertices in one copy of $H'$ and $(r,i)$ to the name of the vertices in the other copy of $H'$. Notice that $G_k$ is planar.

\begin{lemma}\label{th:col1}
There exists a $k$-$L(2,1)$-labelling $L$ of $G_k$ with $L(u)=L(v)=0$ if and only if $(L(a_v),L(a_u)) \in \{ (2,k),(k,2),(k-2,k),(k,k-2) \}$.
\end{lemma}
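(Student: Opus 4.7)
The plan is to prove both directions of the biconditional. For necessity, the heart of the argument is to combine elementary adjacency/distance constraints with Lemma~\ref{le:labelhn}, applied to each of the $2(k-5)$ copies of $H'$ attached to the $b_i$, and then to run a counting argument that eliminates all pairs $(L(a_u), L(a_v))$ except the four listed. For sufficiency, we exhibit explicit labellings.

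I would first collect the basic constraints. The vertices $a_u, a_v, b_1, \ldots, b_{k-5}$ are pairwise at distance at most two in $G_k$: $a_u \sim a_v$, each $b_i$ is adjacent to both $a_u$ and $a_v$, and any two $b_i, b_j$ share the common neighbour $a_u$. Hence they receive pairwise distinct labels. Since $u \sim a_u$, $v \sim a_v$, and each $b_i$ is at distance two from $u$ via $a_u$, we obtain $L(a_u), L(a_v) \ge 2$, $|L(a_u) - L(a_v)| \ge 2$, $L(b_i) \ne 0$, and $|L(b_i) - L(a_u)|, |L(b_i) - L(a_v)| \ge 2$. Applying Lemma~\ref{le:labelhn} to each of the two copies of $H'$ attached at $b_i$ (with distinguished vertices denoted $c_{l,i}, d_{l,i}$ and $c_{r,i}, d_{r,i}$) yields $(L(c), L(d)) \in \{(0,k), (1,k), (k-1,0), (k,0)\}$ for each copy. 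Since $c_{l,i}$ and $c_{r,i}$ lie at distance two via $b_i$ their labels differ; and since they are at distance two from each of $a_u, a_v$, their labels avoid $\{L(a_u), L(a_v)\}$. A short case analysis on $\{L(c_{l,i}), L(c_{r,i})\} \subseteq \{0,1,k-1,k\}$, using $|L(b_i) - L(c)| \ge 2$ and $L(b_i) \ne L(d)$, shows $L(b_i) \notin \{0, k\}$ in all cases; moreover $L(b_i) = 1$ requires both values $k-1$ and $k$ to be available for the two $c$'s, hence requires $\{L(a_u), L(a_v)\} \cap \{k-1, k\} = \emptyset$.

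The counting step is then routine. Letting $F := \{0, k\} \cup \{L(a_u)-1, L(a_u), L(a_u)+1\} \cup \{L(a_v)-1, L(a_v), L(a_v)+1\}$ intersected with $\{0, \ldots, k\}$, the $k-5$ labels assigned to the $b_i$ must be distinct elements of $\{0, \ldots, k\} \setminus F$, further avoiding the value $1$ whenever $\{L(a_u), L(a_v)\}$ meets $\{k-1, k\}$. Assuming without loss of generality $L(a_u) \le L(a_v)$ and running through pairs with $2 \le L(a_u) \le L(a_v) \le k$ and $L(a_v) \ge L(a_u)+2$, only $(2, k)$ and $(k-2, k)$ leave $k-5$ usable labels; every other pair leaves strictly fewer. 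Combined with the swap $a_u \leftrightarrow a_v$ this yields the four pairs in the statement. The main obstacle is carrying out the enumeration cleanly, particularly for small $k$ (notably $k = 6$) where intervals collapse and borderline pairs like $(3, 5)$ fail only by the refined $L(b_i) = 1$ obstruction rather than by a pure count.

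For sufficiency, I would give an explicit labelling in each case. When $(L(a_u), L(a_v)) = (2, k)$, set $L(b_i) = i+3$ for $i = 1, \ldots, k-5$ (distinct labels in $\{4, \ldots, k-2\}$) and take $\{L(c_{l,i}), L(c_{r,i})\} = \{0, 1\}$ in every attached copy of $H'$, completing each interior via Lemma~\ref{le:labelhn}; direct verification shows all constraints hold. When $(L(a_u), L(a_v)) = (k-2, k)$, take $L(b_i) = i+1$ (labels in $\{2, \ldots, k-4\}$) and $\{L(c_{l,i}), L(c_{r,i})\} = \{0, k-1\}$. The remaining two pairs follow by interchanging the roles of $a_u$ and $a_v$.
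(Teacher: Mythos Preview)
Your proposal is correct and follows essentially the same approach as the paper's proof: both combine the elementary distance constraints with Lemma~\ref{le:labelhn} applied to the attached copies of $H'$, then run a counting argument on the labels available to $b_1,\dots,b_{k-5}$, using the key observations that $L(b_i)\notin\{0,k\}$ and that $L(b_i)=1$ forces $\{L(c_{l,i}),L(c_{r,i})\}=\{k-1,k\}$; your explicit labellings for sufficiency also match the paper's. The only difference is organisational: you establish the $b_i\neq k$ and $b_i=1$ obstructions once at the outset and then split on the value of $L(a_v)$, whereas the paper splits first on whether $L(a_v)=k$ and derives the needed $b_i$-obstructions inside each branch.
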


\begin{proof}

First notice that we need $k-5$ different colours to colour the vertices $b_1,...,b_{k-5}$ as they are all at distance two from each other.
We first show that there exists a $k$-$L(2,1)$-labelling $L$ of $G_k$ with $L(u)=L(v)=0$ and $(L(a_u),L(a_v)) \in \{ (2,k),(k,2),(k-2,k),(k,k-2)  \}$.
The first case is when $L(a_u)=2$ and $L(a_v)=k$. Take $L(b_j)=j+3$ for $j=1,...,k-5$. A $k$-$L(2,1)$-labelling is obtained by setting
$L(d_{l,j})=L(d_{r,j})=k$ and $L(c_{l,j})=0$ and $L(c_{r,j})=1$ for $1 \le j \le k-5$ and then using Lemma~\ref{le:labelhn} to give a valid labelling  of the rest of the graph.

A similar argument shows that we may take $(L(a_u),L(a_v))=(k,2)$.

The second case is $L(a_u)=k-2$ and $L(a_v)=k$. Take $L(b_j)=j+1$ for $j=1,...,k-5$. A $k$-$L(2,1)$-labelling is obtained by setting $L(d_{l,j})=0$, $L(d_{r,j})=k$ and $L(c_{l,j})=k-1$ and $L(c_{r,j})=0$ for $1 \le j \le k-5$ and then using Lemma~\ref{le:labelhn} to give a valid labelling of the rest of the graph.

A similar argument shows that we may take $(L(a_u),L(a_v))=(k,k-2)$.

Next we show that there is no $k$-$L(2,1)$-labelling $L$ of $G_k$ with $L(u)=L(v)=0$ and $(L(a_v),L(a_u)) \not\in \{ (2,k),(k,2),(k-2,k),(k,k-2) \}$.
Assume without loss of generality that $L(a_u) < L(a_v)$.
Suppose first that $3 \le L(a_u) \le k-3$ and $L(a_v)=k$.
Note that for any $j$ by considering the proximity of $b_j$ to $u$ and $a_v$, we see that $b_j$ cannot be labelled with $0,k-1$ or $k$.
Furthermore we cannot have $b_j=1$ because then  $\{L(c_{l,j}),L(c_{r,j})\}=\{k,k-1\}$. But they are both at distance two from $a_v$ which is labelled $k$, so this is not possible.
So $b_1,...,b_{k-5}$ must receive distinct labels from $\{2,...,k-2\} \backslash \{ L(a_u)-1, L(a_u), L(a_u)+1 \}$, but this only gives $k-6$ labels
which is not enough.

Now suppose $L(a_v) \not= k$ and $2 \le L(a_u) \le L(a_v)-2 \le k-3$.
Note that for any $j$, $b_i$ cannot be labelled with $0$.
Furthermore $b_j$ cannot be labelled $k$ as
then $\{L(c_{l,j}),L(c_{r,j})\}=\{0,1\}$ and $L(d_{l,j})=L(d_{r,j})=k$ but this is invalid.
So  $b_1,...,b_{k-5}$ must receive distinct labels from  $\{ 1,...,k-1\} \backslash \{ L(a_u)-1,L(a_u),L(a_u)+1,L(a_v)-1,L(a_v),L(a_v)+1\}$.
This is only possible if $L(a_u)=k-3$ and $L(a_v)=k-1$. Then $\{L(b_1),...,L(b_{k-5})\} = \{ 1,...,k-5\}$. However if $L(b_j)=1$ then $\{L(c_{l,j}),L(c_{r,j})\}=\{k-1,k\}$. But this is invalid as $L(a_v)=k-1$.
\end{proof}

Analogously we obtain the following lemma.
\begin{lemma}\label{th:col2}
There exists a $k$-$L(2,1)$-labelling $L$ of $G_k$ with $L(u)=L(v)=k$ if and only if  $(L(a_v),L(a_u)) \in \{ (2,0),(0,2),(k-2,0),(0,k-2) \}$.
\end{lemma}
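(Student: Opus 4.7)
The plan is to deduce this lemma directly from Lemma~\ref{th:col1} by exploiting the label-reversal symmetry of $L(2,1)$-labellings. For any $k$-$L(2,1)$-labelling $L$ of a graph $G$, define $L^*(x) = k - L(x)$. Then $L^*$ maps $V(G)$ into $\{0,\ldots,k\}$, preserves all pairwise label differences in absolute value, and therefore preserves both the ``at least 2 apart on adjacent vertices'' condition and the ``distinct on vertices at distance two'' condition. Hence $L^*$ is also a $k$-$L(2,1)$-labelling of $G$. Since $L^{**}=L$, the operation $L\mapsto L^*$ is an involution on the set of $k$-$L(2,1)$-labellings of $G_k$.

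Restricting this involution, we obtain a bijection between those labellings of $G_k$ with $L(u)=L(v)=0$ and those with $L^*(u)=L^*(v)=k$. Under this bijection, the pair $(L(a_v),L(a_u))$ transforms to $(k-L(a_v),k-L(a_u))$. Applying this to the four admissible pairs from Lemma~\ref{th:col1}, namely $(2,k)$, $(k,2)$, $(k-2,k)$ and $(k,k-2)$, produces respectively $(k-2,0)$, $(0,k-2)$, $(2,0)$ and $(0,2)$. These are precisely the pairs listed in the lemma, so the ``only if'' direction of Lemma~\ref{th:col1} yields the ``only if'' direction here, and the ``if'' direction likewise follows from the corresponding direction of Lemma~\ref{th:col1}.

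There is no substantive obstacle: the only point needing mention is that the involution $L\mapsto L^*$ is a symmetry of the whole construction, which is immediate because the $L(2,1)$ constraints depend only on absolute differences and the graph $G_k$ itself is unchanged by relabelling. Consequently the proof can be presented as a one-line appeal to Lemma~\ref{th:col1} via the involution $x\mapsto k-x$, which is the force of the word ``analogously'' used in the statement.
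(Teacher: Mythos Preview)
Your argument is correct and matches the paper's approach: the paper itself gives no detailed proof for this lemma, stating only ``Analogously we obtain the following lemma,'' and your use of the label-reversal involution $x\mapsto k-x$ is precisely the symmetry that makes the two lemmas analogous. If anything, you have made explicit what the paper leaves implicit.
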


\begin{lemma}\label{th:col3}
There exists a $k$-$L(2,1)$-labelling $L$ of $G_k$ with $L(u)=k$ and $L(v)=0$ if and only if  $(L(a_v),L(a_u)) =(k-1,1)$.
\end{lemma}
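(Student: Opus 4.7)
The plan is to mimic the structure of the proof of Lemma~\ref{th:col1}: first exhibit an explicit $k$-$L(2,1)$-labelling realising $(L(a_v),L(a_u))=(k-1,1)$, and then rule out every other admissible pair by a counting argument on the labels available to $b_1,\dots,b_{k-5}$, supplemented by the $H'$-constraint of Lemma~\ref{le:labelhn}.

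For the ``if'' direction, with $L(u)=k$, $L(v)=0$, $L(a_u)=1$ and $L(a_v)=k-1$, I would set $L(b_j)=j+2$ for $j=1,\dots,k-5$, so the $b_j$'s receive the $k-5$ values $\{3,\dots,k-3\}$, each at distance at least $2$ from every element of $\{0,1,k-1,k\}$. For each $j$, Lemma~\ref{le:labelhn} forces $L(c_{l,j}),L(c_{r,j})\in\{0,1,k-1,k\}$; the distance-$2$ constraints to $a_u$ and $a_v$ then leave exactly $\{0,k\}$, so I take $L(c_{l,j})=0$ and $L(c_{r,j})=k$ and extend each copy of $H'$ using Lemma~\ref{le:labelhn}.

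For the ``only if'' direction, the usual adjacency and distance-$2$ analysis at $u,v,a_u,a_v$ yields $L(a_u)\in\{1,\dots,k-2\}$, $L(a_v)\in\{2,\dots,k-1\}$ and $|L(a_u)-L(a_v)|\ge 2$. Each $b_j$ must lie in $\{1,\dots,k-1\}$ minus the two length-three intervals $\{L(a_u)-1,L(a_u),L(a_u)+1\}$ and $\{L(a_v)-1,L(a_v),L(a_v)+1\}$, and the $k-5$ vertices $b_j$ must receive distinct labels. Furthermore, for each $b_j$, the pair $(L(c_{l,j}),L(c_{r,j}))$ must consist of two distinct values of $\{0,1,k-1,k\}$ that avoid both $L(a_u)$ and $L(a_v)$ and lie at distance at least $2$ from $L(b_j)$. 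I would then split into three cases covering all pairs other than the good one: (a)~$L(a_u)=1$ and $L(a_v)\in\{3,\dots,k-2\}$; (b)~$L(a_v)=k-1$ and $L(a_u)\in\{2,\dots,k-3\}$; (c)~$L(a_u),L(a_v)\in\{2,\dots,k-2\}$. In each case, counting forbidden labels (carefully handling the overlap of the two length-three intervals, which occurs precisely when $|L(a_u)-L(a_v)|=2$, and their overlap with $\{0,k\}$) leaves at most $k-5$ candidate $b_j$ values, with equality only in the boundary sub-cases $L(a_v)=3$ of (a) and $L(a_u)=k-3$ of (b); in those borderline sub-cases the extreme candidate $b_j=k-1$ (respectively $b_j=1$) must still be excluded because the two attached $c$-vertices then admit only a single admissible label in $\{0,1,k-1,k\}\setminus\{L(a_u),L(a_v)\}$, dropping the usable pool to $k-6$.

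The main technical obstacle is precisely this interplay between the arithmetic ``length-three'' obstruction on $b_j$ coming from its neighbours $a_u,a_v$ and the indirect ``$H'$-obstruction'' forcing $c_{l,j},c_{r,j}$ into $\{0,1,k-1,k\}\setminus\{L(a_u),L(a_v)\}$: in the two critical boundary subcases, the arithmetic count alone is insufficient, and only the $c$-vertex obstruction completes the contradiction. Once these cases are handled, $(L(a_v),L(a_u))=(k-1,1)$ is identified as the unique feasible pair, as desired.
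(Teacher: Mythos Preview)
Your proposal is correct and follows essentially the same approach as the paper. The paper compresses your three cases into a single observation that the set $\{1,\dots,k-1\}\setminus\{l_1-1,l_1,l_1+1,l_2-1,l_2,l_2+1\}$ (with $l_1=\min$, $l_2=\max$) can have size $k-5$ only when $(l_1,l_2)\in\{(1,k-1),(1,3),(k-3,k-1)\}$, and then rules out $(1,3)$ and $(k-3,k-1)$ by exactly the $c$-vertex argument you describe for your boundary sub-cases; the ``if'' construction is identical.
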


\begin{proof}
Let  $l_1=\min\{L(a_u),L(a_v)\}$ and $l_2=\max\{ L(a_u),L(a_v)\}$.
The vertices  $b_1,...,b_{k-5}$ must be labelled with distinct labels from $S= \{ 1,...,k-1 \}\backslash \{ l_1-1,l_1,l_1+1,l_2-1,l_2,l_2+1\}$.
The only way that this set can contain $k-5$ elements is when $(l_1,l_2)=(1,k-1)$, $(l_1,l_2)=(1,3)$ or $(l_1,l_2)=(k-3,k-1)$.

We first show that there exists a $k$-$L(2,1)$-labelling $L$ of $G_k$ with $L(u)=k,L(v)=0$ and $(L(a_v),L(a_u)) =(k-1,1)$.

We need $\{L(b_1),...,L(b_{k-5})\}=\{3,...,k-3\}$.
Then let $L(c_{l,j})=0$, $L(d_{l,j})=k$, $L(c_{r,j})=k$ and $L(d_{l,j})=0$ for $1\le j \le k-5$.
So by Lemma~\ref{le:labelhn} we obtain a valid labelling.

Next we show that there is no $k$-$L(2,1)$-labelling of $G_k$ with $L(u)=k$ and $L(v)=0$ and $(L(a_v),L(a_u))\not=(k-1,1)$.

Assume that $l_1=1$ and $l_2=3$ so $L(a_v)=3$ and $L(a_u)=1$. Then the vertices $b_1,...,b_{k-5}$ must take distinct labels from
$ \{  5,...,k-1\}$. However if $b_j$ is labelled $k-1$ the only label $c_{l,j}$ and $c_{r,j}$
can be labelled with is $0$ but $c_{l,j}$ and $c_{r,j}$ must have distinct labels. Therefore this labelling is not possible.

Now suppose $l_1=k-3$ and $l_2=k-1$ then $L(a_v)=k-1$ and $L(a_u)=k-3$. Then the vertices $b_1,...,b_{k-5}$ must take distinct labels
from $ \{  1,...,k-5\}$. However if $b_j$ is labelled $1$ the only label $c_{l,j}$ and $c_{r,j}$
can be labelled with is $k$ but $c_{l,j}$ and $c_{r,j}$ must have distinct labels. Therefore this labelling is not possible.
\end{proof}

\subsection{Summary}

The following theorem summarises the results of this section and follows immediately from
Lemmas~\ref{le:possib2},~\ref{possib},~\ref{th:col1},~\ref{th:col2} and~\ref{th:col3}.

\begin{theorem}\label{th:colgadget}
Let $k\ge 4$ be fixed. There is a $k$-$L(2,1)$-labelling $L$ of $G_k$ with $L(u),L(v) \in \{0,k\}$ if and only if the following conditions are satisfied.
\begin{enumerate}
\item If $(L(u),L(v))=(0,0)$ then
\[(L(a_u),L(a_v))\in \{ (2,k),(k,2),(k-2,k),(k,k-2) \}.\]
\item If $(L(u),L(v))=(k,k)$ then \[(L(a_u),L(a_v))\in \{ (2,0),(0,2),(k-2,0),(0,k-2) \}.\]
\item If $(L(u),L(v))=(k,0)$ then $(L(a_u),L(a_v))=(1,k-1)$.
\item If $(L(u),L(v))=(0,k)$ then $(L(a_u),L(a_v))=(k-1,1)$.
\end{enumerate}
\end{theorem}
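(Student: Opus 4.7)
The plan is to recognise Theorem~\ref{th:colgadget} as a consolidation of the five preceding lemmas, so the proof reduces to matching cases and handling the three different definitions of $G_k$ (for $k=4$, $k=5$, and $k\geq 6$) in turn.

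For $k=4$, I would invoke Lemma~\ref{le:possib2} directly. Since $k-2=2$ when $k=4$, the four-element sets $\{(2,k),(k,2),(k-2,k),(k,k-2)\}$ and $\{(2,0),(0,2),(k-2,0),(0,k-2)\}$ written in the theorem collapse to $\{(2,4),(4,2)\}$ and $\{(0,2),(2,0)\}$, exactly matching cases (i) and (ii) of Lemma~\ref{le:possib2}; the dichromatic cases $(L(u),L(v))\in\{(k,0),(0,k)\}$ give $(L(a_u),L(a_v))\in\{(1,3),(3,1)\}$, which are also exactly cases (iii) and (iv). For $k=5$, I would substitute $k=5$ into the theorem's target sets to obtain $\{(2,5),(5,2),(3,5),(5,3)\}$ and $\{(2,0),(0,2),(3,0),(0,3)\}$, which coincide precisely with the sets listed in Lemma~\ref{possib}, and the dichromatic cases match in the same way.

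For $k\geq 6$, the three cases $(L(u),L(v))\in\{(0,0),(k,k),(k,0)\}$ follow immediately from Lemmas~\ref{th:col1},~\ref{th:col2} and~\ref{th:col3} respectively. Here one small point to note is that the admissible sets appearing in Lemmas~\ref{th:col1} and~\ref{th:col2} are closed under swapping coordinates, so it makes no difference whether the pair is written as $(L(a_u),L(a_v))$ or as $(L(a_v),L(a_u))$. The remaining case $(L(u),L(v))=(0,k)$ I would deduce from Lemma~\ref{th:col3} together with the evident automorphism of $G_k$ that exchanges $u\leftrightarrow v$ and $a_u\leftrightarrow a_v$: applying this automorphism to the labelling provided by Lemma~\ref{th:col3} produces the pair $(L(a_u),L(a_v))=(k-1,1)$.

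There is no genuine obstacle; the only thing to be careful about is the bookkeeping of which coordinate is $L(a_u)$ and which is $L(a_v)$ in each referenced lemma, and verifying that after the parameter substitution and use of the gadget automorphism the target sets in each of the three ranges of $k$ really do match those in the theorem.
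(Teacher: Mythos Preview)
Your proposal is correct and follows exactly the paper's approach: the paper states only that the theorem ``follows immediately from Lemmas~\ref{le:possib2},~\ref{possib},~\ref{th:col1},~\ref{th:col2} and~\ref{th:col3}'', and your case split by the value of $k$ together with the $u\leftrightarrow v$ automorphism for the missing fourth case when $k\ge 6$ is precisely how one unpacks that sentence. Your attention to the coordinate ordering $(L(a_u),L(a_v))$ versus $(L(a_v),L(a_u))$ and to the collapse $k-2=2$ when $k=4$ is more careful than the paper itself.
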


\section{Planar $k$-$L(2,1)$-labelling is NP-complete for $k \ge 4$}\label{sec:npcom4}
We reduce Planar Cubic Two-Colourable Perfect Matching to Planar $k$-$L(2,1)$-Labelling. Suppose we are given a cubic planar graph $G$ corresponding to an instance of Planar Cubic Two-Colourable Perfect Matching. From $G$ we construct a graph $K$ which has the property that $K$ has a $k$-$L(2,1)$-labelling if and only if $G$ has a two-coloured perfect matching.

In order to show this we also construct an auxiliary graph $H$ and define what we call a \emph{coloured orientation}. Then we show that $G$ has a two-coloured
perfect matching if and only if $H$ has a coloured orientation and finally that $H$ has a coloured orientation if and only if $K$ has a
$k$-$L(2,1)$-labelling.

$H$ is obtained by replacing every edge of $G$ with the gadget as depicted in Fig.~\ref{fig610}, where the end-vertices of the edge being replaced are $u,v$.
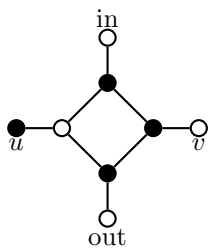
\begin{figure}[ht]
\psset{xunit=0.2cm,yunit=0.2cm,runit=0.2cm,labelsep=4pt}
\psset{arrowscale=1.5}
\begin{center}
\begin{pspicture}(13,15)
\cnode*(0.5,7.5){0.6}{a}
\uput[270](0.5,7.5){$u$}
\cnode(3.5,7.5){0.6}{d}
\ncline{-}{d}{a}
\cnode*(6.5,10.5){0.6}{i}
\cnode*(6.5,4.5){0.6}{j}
\cnode*(9.5,7.5){0.6}{o}
\ncline{-}{d}{j}
\ncline{-}{i}{d}
\ncline{-}{o}{i}
\ncline{-}{j}{o}
\cnode(12.5,7.5){0.6}{r}
\uput[270](12.5,7.5){$v$}
\ncline{-}{o}{r}
\cnode(6.5,13.5){0.6}{u}
\uput[90](6.5,13.5){in}
\ncline{-}{i}{u}
\cnode(6.5,1.5){0.6}{z}
\uput[270](6.5,1.5){out}
\ncline{-}{j}{z}
\end{pspicture}
\end{center}
\caption{An auxiliary edge.}\label{fig610}
\end{figure}

 The gadget has two special vertices labelled \emph{in} and \emph{out}, which we call the \emph{invertex} and the \emph{outvertex} and we explain in a moment. The edges incident with them are called the \emph{inedge} and \emph{outedge}, respectively. We use the phrase \emph{auxiliary edge} to refer to a subgraph of $H$ that has replaced an edge of $G$, that is, any of the copies of the gadget from Fig.~\ref{fig610}. A coloured orientation of an auxiliary graph $H$ is a colouring of the vertices of $H$ with black and white and an orientation of some of the edges satisfying
certain properties. The indegree and outdegree of a vertex $v$ are the number of edges oriented towards $v$ and the number of edges oriented away from $v$, respectively. Unoriented edges are not counted towards indegree and outdegree. A coloured orientation must satisfy the following properties.

\begin{itemize}
\item Every vertex is adjacent to at most one vertex of the opposite colour.
\item An edge is oriented if and only if it is monochromatic.
\item Every vertex except those labelled \emph{out} has outdegree at most two and indegree at most one.
\item Every vertex labelled \emph{out} has indegree zero.
\end{itemize}
We say a coloured orientation is \emph{good} if every vertex of degree three is adjacent to precisely one vertex of the opposite colour and has indegree and
outdegree one.

\begin{lemma}\label{le:main1}
Let $G$ be a cubic planar graph and let $H$ be the corresponding auxiliary graph. If $G$ has a two-coloured perfect matching then $H$ has a good coloured orientation.
\end{lemma}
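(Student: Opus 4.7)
My plan is to extend the given two-colouring of $V(G)$ to a colouring of all of $V(H)$ and then orient each monochromatic edge, using an auxiliary orientation of $G$ to guide the choices. Every vertex of $V(G)\subseteq V(H)$ retains its original colour. For the gadget of a matching edge $xy$ with $c(x)=c(y)=\alpha$, I colour the four interior vertices $d,i,o,j$ with the opposite colour $\bar\alpha$ and the two pendant vertices \emph{in}, \emph{out} with $\alpha$; the monochromatic edges of the gadget are then precisely the edges of the $4$-cycle on $\{d,i,o,j\}$, which I orient as a directed cycle. For the gadget of a non-matching (dichromatic) edge $xz$, $d$ will take the colour of $u$ and $o$ the colour of $v$; the colours of $i,j$ will be dictated by an auxiliary direction on $xz$ defined next.

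Since $G$ is cubic and the given matching $M$ is perfect, the graph $G':=G\setminus M$ is $2$-regular and so decomposes into vertex-disjoint cycles. I orient these cycles to obtain a digraph in which every vertex of $G$ has indegree and outdegree exactly one in $G'$. For each non-matching edge $e=xz$ oriented $x\to z$ in $G'$ (taking $u=x$ and $v=z$), I colour $i$ the same as $u$, $j$ the same as $v$, and \emph{in}, \emph{out} matching $i$ and $j$ respectively. The monochromatic edges of this gadget then split into two disjoint paths: a forced path $\mathit{out}\to j\to o\to v$, whose direction is dictated by the requirement that \emph{out} have indegree zero, and a free path $u\to d\to i\to \mathit{in}$, which I orient in the direction of $e$. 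For $e$ oriented $z\to x$ I swap the colour choices for $i,j$ and for \emph{in}, \emph{out}, yielding the forced path $\mathit{out}\to j\to d\to u$ and the free path $v\to o\to i\to \mathit{in}$.

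Verifying the four properties of a good coloured orientation is then mostly routine. The colour condition (at most one opposite-colour neighbour) and the indegree/outdegree bounds at interior gadget vertices are checked gadget-by-gadget; each degree-three interior vertex $d,i,o,j$ ends up with exactly one opposite-colour neighbour and with indegree and outdegree both one. An edge of $H$ is oriented iff it is monochromatic, and each \emph{out} vertex has indegree zero, both by construction. The only nontrivial point is to check that each original $G$-vertex $x$ has indegree and outdegree one in $H$: its matching gadget contributes no monochromatic edge at $x$, because there $d$ has the opposite colour to $x$, so the two monochromatic edges at $x$ come from its two non-matching gadgets, and by our convention they are oriented at $x$ in the same sense as the corresponding edges of $G'$, giving $x$ exactly one in-edge and one out-edge.

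The main obstacle is reconciling the orientation forced inside each gadget by the \emph{out}-indegree-zero constraint with the need to balance indegree and outdegree at each $G$-vertex. The device that makes everything compatible is the use of two mirror variants of the non-matching gadget, selected according to the direction of the corresponding edge in the oriented $G'$; this transports the cyclic $G'$-orientation into the monochromatic subgraph at each $G$-vertex, and the fact that $G'$ is $2$-regular (hence orientable with indegree one and outdegree one everywhere) is precisely what is needed.
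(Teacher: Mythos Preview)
Your proof is correct and somewhat cleaner than the paper's. Both arguments treat the monochromatic auxiliary edges (those replacing matching edges of $G$) identically, colouring the four-cycle with the opposite colour and orienting it cyclically. The difference lies in how the dichromatic auxiliary edges are handled.

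The paper colours and orients these gadgets by a sequential path-tracing procedure: starting at the outvertex of some dichromatic auxiliary edge, it follows a monochromatic path through a $G$-vertex to the invertex of the neighbouring dichromatic auxiliary edge, then switches colour and continues from that gadget's outvertex, and so on. This process implicitly walks around the cycles of $G\setminus M$, but the argument that it terminates correctly and yields a good coloured orientation requires some bookkeeping.

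You instead observe up front that $G' = G\setminus M$ is $2$-regular and hence decomposes into cycles, orient each cycle once and for all, and then use that orientation to select, for each dichromatic gadget, one of two mirror colourings. The forced direction of the path through the outvertex and the chosen direction of the free path then automatically transport the balanced $G'$-orientation to the monochromatic edges of $H$ incident with each $G$-vertex. This makes the verification that every degree-three vertex has indegree and outdegree exactly one essentially a one-line check, whereas the paper has to argue it from the structure of the sequential process. Both approaches exploit the same underlying fact (the $2$-regularity of $G\setminus M$), but yours makes it explicit and global rather than emergent and local.
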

\begin{proof}
First colour the vertices of $H$ that were present in $G$ with the same colour that they receive in $G$.

We next colour the vertices of auxiliary edges where both end-vertices of the corresponding edge in $G$ receive the same colour.
The in- and outvertex receive the same colour as the end-vertices of the corresponding edge in $G$ and the vertices on the four-cycle receive the opposite colour. We orient this cycle to form a directed cycle, see Fig.~\ref{fig61}.
\begin{figure}[ht]
\psset{xunit=0.2cm,yunit=0.2cm,runit=0.2cm,labelsep=4pt}
\psset{arrowscale=1.5}
\begin{center}
\begin{pspicture}(13,15)
\cnode*(0.5,7.5){0.6}{a}
\uput[270](0.5,7.5){$u$}
\cnode(3.5,7.5){0.6}{d}
\ncline{-}{d}{a}
\cnode(6.5,10.5){0.6}{i}
\cnode(6.5,4.5){0.6}{j}
\cnode(9.5,7.5){0.6}{o}
\ncline{->}{d}{j}
\ncline{->}{i}{d}
\ncline{->}{o}{i}
\ncline{->}{j}{o}
\cnode*(12.5,7.5){0.6}{r}
\uput[270](12.5,7.5){$v$}
\ncline{-}{o}{r}
\cnode*(6.5,13.5){0.6}{u}
\uput[90](6.5,13.5){in}
\ncline{-}{i}{u}
\cnode*(6.5,1.5){0.6}{z}
\uput[270](6.5,1.5){out}
\ncline{-}{j}{z}
\end{pspicture}
\end{center}
\caption{Good coloured orientation of an auxiliary edge if $u$ and $v$ receive the same colour.} \label{fig61}
\end{figure}
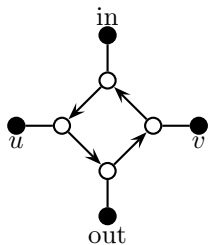

Now we colour all the other vertices and orient edges as follows. Vertices remaining uncoloured all belong to auxiliary edges for which the end-vertices of the corresponding edge in $G$ are coloured differently in $G$.
We start by choosing an auxiliary edge $e$ between a black vertex $v$ and a white vertex $w_1$ of $G$.
In $G$, $v$ has two white neighbours and one black neighbour. Call the other white neighbour $w_2$. Colour the outvertex of $e$ black and
orient the edge incident with it away from the outvertex. Now follow the shortest path from the outvertex, through $v$ and to the invertex of the
auxiliary edge $vw_2$. Colour every uncoloured vertex on this path black and orient every edge consistently with the path.
At each stage of this colouring/orientation process we will colour and orient a path like this from an outvertex of an auxiliary edge, through a vertex present in $G$ to an invertex of a neighbouring auxiliary edge, see Fig.~\ref{fig70}.
It only remains to describe how to choose the outvertex and invertex pair forming the end-vertices of each path.
The first pair is chosen as above. Otherwise, if at some stage, we colour an invertex of an auxiliary edge $f$ with colour $c$ and the outvertex $f$ is still uncoloured then at the next stage we form a path from the outvertex of $f$ colouring the vertices on it with the opposite colour to $c$. If the outvertex of $f$ has already been coloured then we choose another auxiliary edge for which the outvertex is uncoloured. This method ensures that at each stage there is at most one auxiliary edge with the outvertex coloured and the invertex uncoloured and at most one auxiliary edge with the outvertex not coloured but the invertex coloured. Such an uncoloured outvertex is always the next one to be
coloured.

\begin{figure}
\psset{xunit=1.5cm,yunit=1.5cm,runit=1.5cm}
\psset{arrowscale=1.5}
\begin{center}
\begin{pspicture}(4,4)
\cnode*(2,2){0.075}{a}
\cnode(2,2.5){0.075}{b}
\rput(2.15,2.1){\small $v$}
\cnode(2,3){0.075}{c}
\cnode*(2,3.5){0.075}{d}
\rput(2,3.655){\small $w_3$}
\cnode(1.75,2.75){0.075}{e}
\cnode*(1.25,2.75){0.075}{g}
\cnode(2.25,2.75){0.075}{f}
\cnode*(2.75,2.75){0.075}{j}
\ncline{-}{a}{b}
\ncline{->}{b}{e}
\ncline{->}{f}{b}
\ncline{->}{e}{c}
\ncline{->}{c}{f}
\ncline{-}{c}{d}
\ncline{-}{e}{g}
\ncline{-}{f}{j}
\cnode*(2.53,1.47){0.075}{bb}
\ncline{->}{bb}{a}
\cnode*(2.88,1.47){0.075}{ee}
\cnode*(3.18,1.77){0.075}{gg}
\ncline{->}{gg}{ee}
\ncline{->}{ee}{bb}
\cnode(2.88,1.12){0.075}{cc}
\cnode(3.23,0.77){0.075}{dd}
\rput(3.4,0.6){\small $w_1$}
\ncline{->}{dd}{cc}
\ncline{-}{ee}{cc}
\cnode(2.53,1.12){0.075}{ff}
\cnode(2.23,0.82){0.075}{hh}
\ncline{->}{ff}{hh}
\ncline{-}{ff}{bb}
\ncline{->}{cc}{ff}
\cnode*(1.47,1.47){0.075}{bbb}
\ncline{->}{a}{bbb}
\cnode(1.47,1.12){0.075}{eee}
\cnode(1.77,0.82){0.075}{ggg}
\ncline{->}{ggg}{eee}
\ncline{-}{eee}{bbb}
\cnode*(1.12,1.47){0.075}{fff}
\cnode(1.12,1.12){0.075}{ccc}
\cnode(0.77,0.77){0.075}{ddd}
\rput(0.65,0.6){\small $w_2$}
\ncline{->}{ccc}{ddd}
\ncline{->}{eee}{ccc}
\ncline{-}{ccc}{fff}
\ncline{->}{bbb}{fff}
\cnode*(0.82,1.77){0.075}{hhh}
\ncline{->}{fff}{hhh}
\end{pspicture}
\end{center}
\caption{Assignment of a good coloured orientation on $H$.}\label{fig70}
\end{figure}
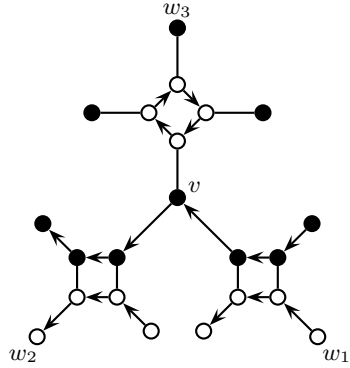

Due to the construction process, every vertex of $H$ which is also present in $G$ is adjacent to exactly one vertex of the opposite colour and has indegree and outdegree one. Clearly the same is true for all vertices of degree three of auxiliary edges where both end-vertices of the corresponding edge in $G$ receive the same colour. Now consider an auxiliary edge which corresponds to a dichromatic edge $e$ in $G$. Due to the colouring/orientation process the invertex and outvertex must receive opposite colours and the shortest path from each of them to the end-vertices of $e$ with the same colour is monochromatic.
It follows that all vertices of degree three on the auxiliary edge must be adjacent to exactly one vertex of the opposite colour and have indegree and outdegree one, see Fig.~\ref{fig70}.
Therefore the method yields a good coloured orientation.
\end{proof}

\begin{lemma}\label{le:goodorient}
Let $G$ be a cubic planar graph and let $H$ be the corresponding auxiliary graph. If $H$ has a coloured orientation, then it has a good coloured
orientation.
\end{lemma}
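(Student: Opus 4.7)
The plan is to show that in any coloured orientation of $H$ the colouring restricted to $V(G)$ is already a $2$-coloured perfect matching of $G$; Lemma~\ref{le:main1} will then supply a good coloured orientation with the same $G$-vertex colouring.

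Step 1 is to enumerate, given the colours of the two $G$-endpoints $u,v$ of an auxiliary edge, the valid configurations (interior colouring plus orientation of the monochromatic edges). The ``at most one opposite-colour neighbour'' rule forces the interior $(d,i,j,o)$ into two patterns when $u,v$ share a colour and four patterns when they differ; the orientation constraints (in-degree $\le 1$, out-degree $\le 2$, in-degree zero at the out-vertex) then restrict things further. I would classify each surviving configuration by the pair of \emph{contributions} it makes at $(u,v)$: each contribution is ``incoming'' (a monochromatic edge oriented towards the $G$-end), ``outgoing'', or ``dichromatic''. The key facts to establish are that a same-colour aux edge realises either $(\mathrm{in},\mathrm{in})$ (interior same as endpoints; call this type $1a$) or $(\mathrm{dichro},\mathrm{dichro})$ (interior opposite; type $1b$), and that a different-colour aux edge realises only one of the five pairs $(\mathrm{in},\mathrm{dichro}), (\mathrm{out},\mathrm{in}), (\mathrm{in},\mathrm{in}), (\mathrm{in},\mathrm{out}), (\mathrm{dichro},\mathrm{in})$; in particular no different-colour aux edge can have both ends outgoing.

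Step 2 is a global double-counting at $V(G)$. Each $G$-vertex's triple $(\text{indeg},\text{outdeg},\text{dichro})$ lies in $\{(1,2,0),(1,1,1),(0,2,1)\}$; call these types $A,B,C$ with counts $n_A,n_B,n_C$ summing to $n=|V(G)|$. Writing $|1a|,|1b|$ for the numbers of same-colour aux edges of each kind and $n_1,\ldots,n_5$ for the numbers of different-colour aux edges of each of the five pair types above (in that order), counting the total in-, out-, and dichromatic contributions at $V(G)$ two ways yields
\begin{align*}
2|1a|+n_1+n_2+2n_3+n_4+n_5 &= n_A+n_B,\\
n_2+n_4 &= 2n_A+n_B+2n_C,\\
2|1b|+n_1+n_5 &= n_B+n_C.
\end{align*}
Subtracting the second equation from the first gives $2|1a|+n_1+2n_3+n_5 = -n_A-2n_C$. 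The left side is non-negative and the right is non-positive, so every term must vanish: $n_A=n_C=0$ and $|1a|=n_1=n_3=n_5=0$.

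Consequently every $G$-vertex is of type $B$, with a single dichromatic neighbour; because no different-colour aux edge contributes a dichromatic (since $n_1=n_5=0$), that neighbour must lie in a type-$1b$ same-colour aux edge. Hence every $G$-vertex has exactly one $G$-neighbour of its own colour, the $V(G)$-colouring is a $2$-coloured perfect matching of $G$, and Lemma~\ref{le:main1} delivers a good coloured orientation of $H$. The main obstacle will be Step 1: a careful enumeration of the $2^4$ possible interior colourings for each parity of $(u,v)$ and the accompanying orientation analysis, relying critically on the asymmetric role of the out-vertex (whose incident monochromatic edge, if any, is forced to be directed away from it). Once this is in place the global identity and the appeal to Lemma~\ref{le:main1} are routine.
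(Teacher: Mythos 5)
Your proposal is correct, and it takes a genuinely different route from the paper. The paper works entirely inside $H$: it first recolours, where necessary, the in- and outvertices of each auxiliary edge so that a monochromatic four-cycle always has dichromatic in- and outedges, then deletes the dichromatic (unoriented) edges and analyses the components of the remaining oriented subgraph (outdegree at most two, indegree at most one, out-vertices as sources), concluding by a count of invertices versus outvertices that every non-trivial component is a directed circuit or a directed outvertex-to-invertex path; hence the normalized orientation is itself already good. You instead make a detour through $G$: a local classification of the admissible configurations of an auxiliary edge, followed by a global double count at $V(G)$, shows that the colouring restricted to $V(G)$ is already a two-coloured perfect matching, and then Lemma~\ref{le:main1} (whose proof is independent of this lemma, so there is no circularity) rebuilds a good coloured orientation. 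I checked the facts you defer to Step~1 and they do hold: the ``at most one opposite-colour neighbour'' rule forbids alternating four-cycles, forbids the adjacent-pair colourings when $u,v$ agree, and leaves exactly the four patterns you describe when they differ; in the monochromatic-four-cycle case the four oriented cycle edges saturate the indegrees of $d,i,j,o$, forcing the edge at a matching $G$-end to point into that end; and in the adjacent-pair case the indegree-zero rule at the outvertex forces the chain $\mathrm{out}\to j\to\cdots$ and hence an incoming edge at one $G$-end, so $(\mathrm{out},\mathrm{out})$, $(\mathrm{out},\mathrm{dichro})$ and $(\mathrm{dichro},\mathrm{out})$ never occur --- which is exactly the asymmetry your counting identity exploits. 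Your equations and the subtraction argument are correct, and the conclusion (every $G$-vertex has exactly one dichromatic incident edge, necessarily coming from a same-colour type-$1b$ auxiliary edge) does yield the perfect matching. Comparing the two: the paper's argument is shorter, self-contained, and avoids the case enumeration; yours is heavier on local analysis but proves a stronger intermediate fact (any coloured orientation already induces a two-coloured perfect matching on $V(G)$), which in particular would also give Lemma~\ref{le:main2} without passing through goodness. To turn your sketch into a complete proof you would still need to write out the Step~1 enumeration in full, but there is no gap in the plan.
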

\begin{proof}
Consider the possible coloured orientations of an auxiliary edge. Because each vertex is adjacent to at most one of the opposite colour the only ways in which the four-cycle of an auxiliary edge may be coloured are with all four vertices receiving the same colour or with a pair of adjacent vertices receiving one colour and the other pair receiving the opposite colour.
In the first case we may change the colours of the invertex and outvertex (if necessary) to be the opposite colour to that of the vertices in the four-cycle. (We also remove the orientation of the inedge and outedge if necessary.)
In this way both the inedge and the outedge are dichromatic. In the second case the fact that each vertex is adjacent to at most one vertex of the
opposite colour forces both the invertex and the outvertex to have the same colour as their neighbour.

From now on we will assume we have a coloured orientation with each auxiliary edge being coloured in this way.
We will show that if $H$ has a coloured orientation then it has a good coloured orientation.
Let $H'$ be formed from $H$ by deleting all the dichromatic, or equivalently unoriented edges, and consider a connected component $C$ of $H'$. In $H'$ every vertex has outdegree at most two and indegree at most one.
So $C$ is either an isolated vertex, a directed circuit with a number of trees rooted on the circuit and directed away from the circuit or a directed rooted tree in which all edges are oriented away from the root.
Bearing in mind the constraints on the in- and outdegree of the vertices, we see that every vertex of degree three in the auxiliary graph has total degree at least two in $H'$. Leaves of $H'$ correspond to invertices and roots with degree one correspond to either invertices or outvertices.
Notice that the isolated vertices of $H'$ can only be invertices or outvertices and by the remarks at the beginning of the proof exactly half of the isolated vertices are invertices. Hence the numbers of invertices and outvertices appearing in $H'$ that are not isolated are equal. So the number of leaves of $H'$ is at most the number of roots of tree components. Consequently each connected component of $H'$, that is not just an isolated vertex, is either a path beginning at an ouvertex and ending at an invertex or a directed circuit. So every vertex of degree three in the auxiliary graph has one out-neighbour, one in-neighbour and one incident unoriented edge. Therefore the coloured orientation is good.
\end{proof}

\begin{lemma}\label{le:main2}
Let $G$ be a cubic planar graph and let $H$ be the corresponding auxiliary graph. If $H$ has a good coloured orientation then $G$ has a two-coloured perfect matching.
\end{lemma}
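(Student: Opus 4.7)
My plan is to colour the vertices of $G$ with the colour they receive in the good coloured orientation of $H$, and show that this is a two-coloured perfect matching. The key step is a case analysis of the possible colourings of a single auxiliary edge.

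Fix an auxiliary edge replacing an edge $uv$ of $G$, with the 4-cycle $d$-$i$-$o$-$j$ of Fig.~\ref{fig610}, where $d$ is adjacent to $u$, $o$ is adjacent to $v$, $i$ is adjacent to the invertex and $j$ is adjacent to the outvertex. Each of $d,i,j,o$ has degree three, so each must have exactly one neighbour of the opposite colour. I will enumerate the possible two-colourings of $\{d,i,o,j\}$ up to swapping colours. If $d$ and $o$ receive one colour while $i$ and $j$ receive the other, then $d$ already has two opposite-coloured neighbours in the cycle, contradicting goodness; similarly one vertex being isolated in colour is impossible. Three cases remain:
\begin{itemize}
\item\emph{Case A:} all four of $d,i,o,j$ receive the same colour. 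Then inside the cycle each of them has no opposite-coloured neighbour, so the unique opposite-coloured neighbour of $d$ must be $u$ and the unique opposite-coloured neighbour of $o$ must be $v$; hence $u$ and $v$ receive the same colour.
\item\emph{Case B:} $d,i$ receive one colour and $j,o$ the other. Then $d$ has $j$ as its opposite-coloured cycle neighbour, forcing $u$ to match $d$; and $o$ has $i$ as its opposite-coloured cycle neighbour, forcing $v$ to match $o$. Thus $u$ and $v$ receive different colours.
\item\emph{Case B$'$:} $d,j$ receive one colour and $i,o$ the other, which is symmetric to Case B and again forces $u$ and $v$ to receive different colours.
\end{itemize}
In Case A the $H$-neighbour of $u$ in this gadget (namely $d$) is opposite in colour to $u$, while in Cases B and B$'$ it has the same colour as $u$. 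So the colour of the unique $H$-neighbour of $u$ lying in the gadget detects precisely whether $u$ and $v$ are given the same colour in $G$.

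Now fix a vertex $v$ of $G$. Its three $H$-neighbours are exactly the three ``side'' vertices (one per auxiliary edge incident to $v$), each playing the role of $d$ or $o$ in its gadget. Because the coloured orientation is good and $v$ has degree three in $H$, exactly one of these three $H$-neighbours is of the opposite colour to $v$. By the dichotomy above, this corresponds to exactly one auxiliary edge at $v$ being in Case A, and hence to exactly one $G$-neighbour of $v$ receiving the same colour as $v$. This holds for every $v \in V(G)$, so the restricted colouring is a two-coloured perfect matching of $G$, completing the proof.

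The main obstacle is the case analysis of the 4-cycle: one must be careful to rule out the configuration where opposite vertices of the cycle share a colour, and to track that the constraint ``exactly one opposite-coloured neighbour'' at each of $d,i,j,o$ propagates correctly to pin down the colours of $u$ and $v$ in each remaining case. Once this is done, the rest is an easy counting argument using the degree-three condition at $v$.
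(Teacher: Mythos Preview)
Your proof is correct and follows essentially the same approach as the paper: both argue by case analysis on the colouring of the four-cycle in an auxiliary edge, then use the ``exactly one opposite-coloured neighbour'' condition at a vertex $v\in V(G)$ to conclude that exactly one $G$-neighbour shares $v$'s colour. The only difference is presentational: the paper packages the case analysis as the single observation that a terminal vertex always receives the opposite colour to the cycle-neighbour of the \emph{other} terminal vertex (citing the proof of Lemma~\ref{le:goodorient}), whereas you carry out the case split explicitly and track the colour of the \emph{near} cycle-neighbour instead---your version is more self-contained but otherwise the same argument.
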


\begin{proof}
Consider a vertex $v$ of $G$ and let $w_1,w_2$ and $w_3$ be its neighbours in $G$. Suppose without loss of generality that $v$ is coloured
black in the good coloured orientation of $H$. We will show that in $H$, two of the vertices $w_1,w_2,w_3$ are coloured white and one is coloured black. Then we only need to assign to any vertex in $G$ the colour it receives in the good coloured orientation in $H$ to obtain a two-coloured perfect matching of $G$.

Vertex $v$ has two black neighbours and one white neighbour in $H$.
In the proof of Lemma~\ref{le:goodorient} we showed that in a good coloured orientation a terminal vertex of an auxiliary edge receives the opposite colour to the unique neighbour in the auxiliary edge of the other terminal vertex of the auxiliary edge. Thus two of the vertices $w_1,w_2,w_3$ must be coloured white and the other one black.
\end{proof}

Now given an instance $G$ of Planar Cubic Two-Colourable Perfect Matching, we define an instance $K$ of $k$-$L(2,1)$-labelling. First form the auxiliary graph $H$. For every vertex $v$ of $H$ add sufficient vertices of degree one with edges joining them to $v$ to ensure that $v$ has degree $k-1$.
Now replace each edge that was originally present in $H$ by the gadget $G_k$ identifying the vertices $u,v$ of $G_k$ with the two end-vertices of edges of $H$ being replaced. Finally for each outvertex $v$ choose a neighbour $w$ of $v$ with degree one and add $k-2$ vertices of degree one joined to $w$.
To illustrate this, suppose that $k=4$ and $v$ is adjacent to $w_1,w_2,w_3$ in $G$. In Fig.~\ref{fig300} we show how the neighbourhood of $v$ is modified in $K$.
Note that $K$ can be constructed from $G$ in time $O(n)$.

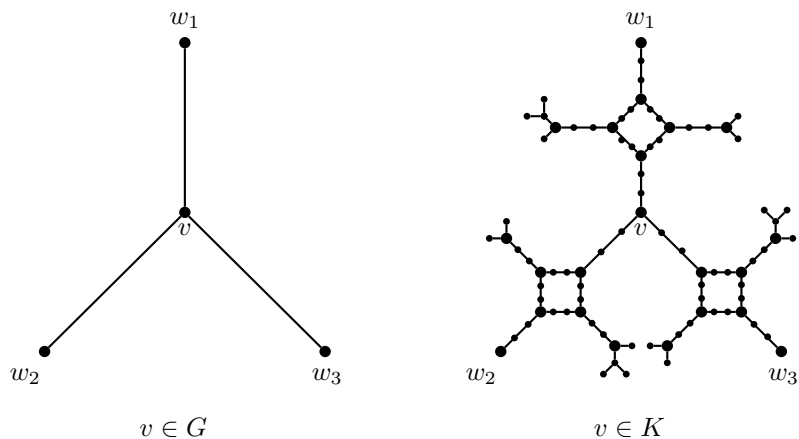
\begin{figure}[ht]
\begin{center}
\psset{xunit=1.5cm,yunit=1.5cm,runit=1.5cm}
\begin{pspicture}(1.1,0)(9,4)
\cnode*(7,2){0.05}{a}
\rput(7,1.85){$v$}
\cnode*(7,2.17){0.03}{r}
\cnode*(7,2.34){0.03}{r}
\cnode*(7,2.5){0.05}{b}
\cnode*(6.92,2.58){0.03}{r}
\cnode*(6.83,2.64){0.03}{r}
\cnode*(7.08,2.58){0.03}{r}
\cnode*(7.16,2.64){0.03}{r}
\cnode*(7,3){0.05}{c}
\cnode*(7.08,2.91){0.03}{r}
\cnode*(7.16,2.83){0.03}{r}
\cnode*(7,3.17){0.03}{r}
\cnode*(7,3.34){0.03}{r}
\cnode*(7,3.5){0.05}{d}
\rput(7.0,3.7){$w_1$}
\rput(3.0,3.7){$w_1$}
\cnode*(6.75,2.75){0.05}{e}
\cnode*(6.83,2.83){0.03}{r}
\cnode*(6.91,2.91){0.03}{r}
\cnode*(6.58,2.75){0.03}{r}
\cnode*(6.41,2.75){0.03}{r}
\cnode*(6.25,2.75){0.05}{g}
\cnode*(6.15,2.65){0.03}{h}
\cnode*(6.15,2.85){0.03}{i}
\cnode*(6.15,2.85){0.03}{i}
\cnode*(6.15,3){0.03}{i1}
\cnode*(6.0,2.85){0.03}{i2}
\ncline{-}{i}{i1}
\ncline{-}{i}{i2}
\cnode*(7.25,2.75){0.05}{f}
\cnode*(7.42,2.75){0.03}{r}
\cnode*(7.59,2.75){0.03}{r}
\cnode*(7.75,2.75){0.05}{j}
\cnode*(7.85,2.85){0.03}{k}
\cnode*(7.85,2.65){0.03}{l}
\ncline{-}{a}{b}
\ncline{-}{b}{e}
\ncline{-}{b}{f}
\ncline{-}{e}{c}
\ncline{-}{f}{c}
\ncline{-}{c}{d}
\ncline{-}{e}{g}
\ncline{-}{g}{h}
\ncline{-}{i}{g}
\ncline{-}{f}{j}
\ncline{-}{j}{k}
\ncline{-}{j}{l}
\ncline{-}{zz}{g}

\cnode*(7.53,1.47){0.05}{bb}
\ncline{-}{a}{bb}
\cnode*(7.18,1.82){0.03}{r}
\cnode*(7.36,1.66){0.03}{r}
\cnode*(7.88,1.47){0.05}{ee}
\cnode*(8.18,1.77){0.05}{gg}
\cnode*(8.18,1.92){0.03}{ii}
\cnode*(8.33,1.77){0.03}{jj}
\cnode*(8.08,2.02){0.03}{ii1}
\cnode*(8.28,2.02){0.03}{ii2}
\ncline{-}{ii1}{ii}
\ncline{-}{ii2}{ii}
\ncline{-}{ii}{gg}
\ncline{-}{gg}{jj}
\ncline{-}{gg}{ee}
\cnode*(7.98,1.57){0.03}{r}
\cnode*(8.08,1.67){0.03}{r}

\cnode*(7.64,1.47){0.03}{r}
\cnode*(7.76,1.47){0.03}{r}
\ncline{-}{ee}{bb}
\cnode*(7.88,1.12){0.05}{cc}
\cnode*(7.99,1.01){0.03}{r}
\cnode*(8.11,0.9){0.03}{r}

\cnode*(8.23,0.77){0.05}{dd}
\rput(8.25,0.55){$w_3$}
\rput(4.25,0.55){$w_3$}
\ncline{-}{dd}{cc}
\cnode*(7.88,1.36){0.03}{r}
\cnode*(7.88,1.24){0.03}{r}
\ncline{-}{ee}{cc}

\cnode*(7.53,1.12){0.05}{ff}
\cnode*(7.43,1.02){0.03}{r}
\cnode*(7.33,0.92){0.03}{r}
\cnode*(7.23,0.82){0.05}{hh}
\cnode*(7.08,0.82){0.03}{kk}
\cnode*(7.23,0.67){0.03}{ll}
\ncline{-}{ff}{hh}
\ncline{-}{ff}{bb}
\ncline{-}{kk}{hh}
\ncline{-}{ll}{hh}
\cnode*(7.53,1.36){0.03}{r}
\cnode*(7.53,1.24){0.03}{r}
\ncline{-}{cc}{ff}
\cnode*(7.64,1.12){0.03}{r}
\cnode*(7.76,1.12){0.03}{r}

\cnode*(6.47,1.47){0.05}{bbb}
\cnode*(6.65,1.65){0.03}{r}
\cnode*(6.83,1.83){0.03}{r}
\ncline{-}{a}{bbb}
\cnode*(6.47,1.12){0.05}{eee}
\cnode*(6.57,1.02){0.03}{r}
\cnode*(6.67,0.92){0.03}{r}
\cnode*(6.77,0.82){0.05}{ggg}
\cnode*(6.92,0.82){0.03}{kkk}
\cnode*(6.77,0.67){0.03}{lll}
\cnode*(6.67,0.57){0.03}{lll1}
\cnode*(6.87,0.57){0.03}{lll2}
\ncline{-}{lll}{lll1}
\ncline{-}{lll}{lll2}
\ncline{-}{kkk}{ggg}
\ncline{-}{lll}{ggg}
\ncline{-}{eee}{ggg}
\cnode*(6.47,1.23){0.03}{r}
\cnode*(6.47,1.35){0.03}{r}
\ncline{-}{eee}{bbb}
\cnode*(6.12,1.47){0.05}{fff}
\cnode*(6.02,1.57){0.03}{r}
\cnode*(5.92,1.67){0.03}{r}
\cnode*(6.12,1.12){0.05}{ccc}
\cnode*(6.01,1.01){0.03}{r}
\cnode*(5.89,0.89){0.03}{r}
\cnode*(5.77,0.77){0.05}{ddd}
\rput(5.6,0.55){$w_2$}
\rput(1.6,0.55){$w_2$}
\ncline{-}{ccc}{ddd}
\cnode*(6.12,1.23){0.03}{r}
\cnode*(6.12,1.35){0.03}{r}
\cnode*(6.23,1.12){0.03}{r}
\cnode*(6.35,1.12){0.03}{r}

\ncline{-}{eee}{ccc}
\ncline{-}{ccc}{fff}
\cnode*(6.23,1.47){0.03}{r}
\cnode*(6.35,1.47){0.03}{r}

\ncline{-}{fff}{bbb}
\cnode*(5.82,1.77){0.05}{hhh}
\cnode*(5.67,1.77){0.03}{iii}
\cnode*(5.82,1.92){0.03}{jjj}
\rput(6.9,0.1){$v \in K$}
\ncline{-}{iii}{hhh}
\ncline{-}{jjj}{hhh}
\ncline{-}{fff}{hhh}
\cnode*(3,2){0.05}{a}
\rput(3,1.85){$v$}
\cnode*(3,3.5){0.05}{d}
\cnode*(4.23,0.77){0.05}{dd}
\rput(2.9,0.1){$v \in G$}
\cnode*(1.77,0.77){0.05}{ddd}
\ncline{-}{a}{d}
\ncline{-}{a}{dd}
\ncline{-}{a}{ddd}
\end{pspicture}
\end{center}
\caption{Construction of graph $K$ from $G$ for $k=4$.}\label{fig300}
\end{figure}

\begin{lemma}\label{le:main3}
Let $G$ be a cubic planar graph and let $H$ be the corresponding auxiliary graph. Let $K$ be the instance of $k$-$L(2,1)$-labelling constructed from $G$ as described above. Then $H$ has a good coloured orientation if and only if $K$ has a $k$-$L(2,1)$-labelling.
\end{lemma}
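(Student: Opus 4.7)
The plan is to use Theorem~\ref{th:colgadget} as a dictionary: the two labels $0$ and $k$, which every vertex of $H$ is forced to receive in any $k$-$L(2,1)$-labelling of $K$, play the role of the two colours, while the choice of internal labels $(L(a_u), L(a_v))$ in each $G_k$ encodes the dichromatic/monochromatic status of the corresponding edge of $H$ and, when monochromatic, its orientation. I first check the forcing: a degree-$3$ vertex of $H$ is padded in $K$ to degree $k-1$ by $k-4$ pendants plus three gadget-neighbours, a degree-$1$ in- or outvertex gets $k-2$ pendants plus one gadget-neighbour, and the $w$-pendant of each outvertex is also of degree $k-1$ by construction. Hence in any $k$-$L(2,1)$-labelling $L$ of $K$, every $v \in V(H)$ and every such $w$ receives label $0$ or $k$. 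I fix the convention that a monochromatic edge $xy$ of $H$ with $L(x) = L(y) = 0$ is oriented $x \to y$ exactly when the $a$-label adjacent to $x$ lies in $\{2, k-2\}$ (so that the $a$-label adjacent to $y$ equals $k$), and symmetrically for $L(x) = L(y) = k$ with $0$ replacing $k$.

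For the direction ($\Leftarrow$), I start from a labelling $L$ of $K$ and build a coloured orientation of $H$ by this convention. The key local check is at a degree-$3$ vertex $v$ of $H$: its three gadget-neighbours are pairwise at distance $2$ via $v$ and so carry distinct labels. By Theorem~\ref{th:colgadget}, when $L(v) = 0$ these labels lie in three pairwise disjoint classes, $\{k\}$ (incoming monochromatic), $\{2, k-2\}$ (outgoing monochromatic) and $\{k-1\}$ (dichromatic), with the analogous disjoint classes $\{0\}$, $\{2, k-2\}$, $\{1\}$ when $L(v) = k$. A short case analysis on multiplicities in each class translates distinctness into precisely the coloured-orientation axioms --- at most one dichromatic, at most one incoming, at most two outgoing edges at $v$. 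At an outvertex $v$, distance-$2$ distinctness between the gadget-neighbour and the degree-$(k-1)$ pendant $w$ (which necessarily has the opposite label to $v$) forbids exactly the incoming-monochromatic value, delivering indegree $0$. Applying Lemma~\ref{le:goodorient} then upgrades the resulting coloured orientation to a good one.

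For the direction ($\Rightarrow$), I start from a good coloured orientation of $H$, set $L(v) \in \{0, k\}$ on $V(H)$ according to the colour, and for each edge of $H$ choose a labelling of the corresponding $G_k$ from Theorem~\ref{th:colgadget} matching the edge's state. Goodness ensures that at each degree-$3$ vertex $v$ there is exactly one incoming, one outgoing, and one dichromatic incident edge, so the three adjacent $a$-labels lie one in each of the disjoint classes above, and the $k-4$ remaining pendants of $v$ are labelled injectively with the leftover labels from $\{2, \ldots, k\}$ (or $\{0, \ldots, k-2\}$ when $L(v) = k$). At each outvertex $v$, I set $L(w) = k - L(v)$; since $v$ has indegree $0$, the gadget-neighbour's label is never $L(w)$, and the $k-2$ pendants of both $v$ and $w$ absorb the remaining labels without clash.

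The main obstacle is to establish the three-way partition of local $a$-labels around every $v \in V(H)$ and to check that it remains correct for every $k \ge 4$, including the small cases $k = 4, 5$ where $G_k$ is defined separately (Lemmas~\ref{le:possib2} and~\ref{possib}). Once this partition is in place, the translation between distance-$2$ legality in $K$ and the coloured-orientation axioms at each vertex of $H$ is immediate, and the role of the $w$-pendant at outvertices is exactly to forbid the incoming-monochromatic label on the outedge.
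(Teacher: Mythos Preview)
Your proposal is correct and follows essentially the same approach as the paper: both directions hinge on Theorem~\ref{th:colgadget} as the dictionary between $\{0,k\}$-labels and colours, your orientation rule (head at the endpoint whose adjacent $a$-label lies in $\{0,k\}$) coincides with the paper's, and in the $(\Leftarrow)$ direction you, like the paper, first extract a coloured orientation from distance-$2$ distinctness of the $a$-labels around each vertex and then invoke Lemma~\ref{le:goodorient} to upgrade it to a good one. Your handling of the outvertex is in fact slightly cleaner than the paper's phrasing, since you allow for the outedge to be dichromatic rather than asserting it is always oriented.
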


\begin{proof}
Suppose that $K$ has a $k$-$L(2,1)$-labelling $L$.
We now describe how to obtain a coloured orientation of $H$ from $L$.
Any vertex in $H$ corresponds to a vertex of degree $k-1$ in $K$ and so must be coloured $0$ or $k$. Colour a vertex of $H$ white if it corresponds to a vertex labelled $0$ in $K$ and black if it corresponds to vertex labelled $k$ in $K$.

We orient some of the edges of $H$ as follows. If $uv$ is an edge of $H$ then there is a path $u,a_u,a_v,v$ between $u,v$ in $K$ where $u$ is adjacent to $a_u$ and $v$ is adjacent to $a_v$. Orient the edge $uv$ from $u$ to $v$ if and only if $a_v \in \{0,k\}$ and orient it from $v$ to $u$ if and only if $a_u \in \{0,k\}$. From Theorem~\ref{th:colgadget} it follows that in our colouring of $H$, each vertex of $H$ is adjacent to at most one vertex of the opposite colour, and an edge is oriented if and only if it joins two vertices of the same colour.
Consider a vertex $v\in H$. All neighbours of $v$ in $K$ must receive different colours, so in $H$, $v$ has at most one incoming edge, and at most two outgoing edges. Finally let $u$ be an outvertex of $H$. Then $u$ is part of exactly one copy of the gadget $G_k$ and has a neighbour $w$ of degree $k-1$ that is not part of this copy of $G_k$. We have $\{L(u),L(w)\}=\{0,k\}$ which means that no other neighbour of $u$ is labelled $0$ or $k$ and hence $u$ has indegree $0$. Therefore $H$ has a coloured orientation and by Lemma~\ref{le:goodorient}, $H$ has a good coloured orientation.

Now suppose that $H$ has a good coloured orientation. We will show how to construct a $k$-$L(2,1)$-labelling $L$ of $K$.
First label all vertices $v$ in $K$ that appear in $H$, so that $L(v)=0$ if $v$ is coloured white in $H$ and otherwise $L(v)=k$. Next give labels to all the remaining vertices that appear in a copy of the gadget $G_k$.
Let $uv$ be an edge of $H$ and suppose without loss of generality that $L(u)=0$. Let $u,a_u,a_v,v$ be the path of length three from $u$ to $v$ in $K$. If $uv$ is not oriented, let $L(a_u)=k-1$ and $L(a_v)=1$. If $uv$ is oriented from $u$ to $v$ then let $L(a_u)=2,L(a_v)=k$ and if $uv$ is oriented from $v$ to $u$ then let $L(a_u)=k,L(a_v)=2$. Furthermore if $w \in V(H)$ then, because we start from a good coloured orientation of $H$, its three neighbours in $K$ receive different labels. Then Theorem~\ref{th:colgadget} shows that $L$ may be extended so that any vertex appearing in a copy of $G_k$ receives a label. For each outvertex, its neighbour of degree $k-1$ must be labelled. This can be done because each edge in $H$ adjacent to an outvertex $x$ is oriented away from $x$, so one of the labels $0,k$ is always available. Finally the vertices of degree one form an independent set and are all adjacent to vertices of degree $k-1$ that have received label $0$ or $k$. So they may be labelled. Hence $K$ has a $k$-$L(2,1)$-labelling.
\end{proof}

We now return to the main problem of this paper.
The following theorem which is the main statement of this paper follows immediately from Theorem~\ref{th:cubplanp} and Lemmas~\ref{le:main1}, \ref{le:goodorient},
\ref{le:main2} and~\ref{le:main3}.

\begin{theorem}
Problem~\ref{pro:main} is NP-complete.
\end{theorem}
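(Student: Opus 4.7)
The plan is to observe that the theorem follows by assembling the machinery developed in the previous sections, so the proposal is essentially to verify that the reduction described in Section~\ref{sec:npcom4} really is a polynomial time many-one reduction from Planar Cubic Two-Colourable Perfect Matching to Planar $k$-$L(2,1)$-Labelling, and then to invoke Theorem~\ref{th:cubplanp}.

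First I would establish membership in NP: given a mapping $L: V(K) \to \{0,\dots,k\}$, one can verify in polynomial time that every pair of adjacent vertices receives labels differing by at least two and every pair at distance two receives distinct labels, so Problem~\ref{pro:main} lies in NP. Next I would note that for a planar cubic graph $G$, the construction of the auxiliary graph $H$ (by replacing every edge of $G$ with a copy of the gadget in Fig.~\ref{fig610}) and then of $K$ (by replacing each edge of $H$ with a copy of $G_k$, padding every vertex of $H$ with degree-one neighbours to bring its degree up to $k-1$, and attaching $k-2$ further pendants at a neighbour of each outvertex) takes $O(n)$ time. I would also check that $K$ is planar: the auxiliary edge gadget is planar and can be drawn inside a face incident to the original edge of $G$, the gadget $G_k$ is planar (as shown in Lemmas~\ref{le:labelhn} and in the subsections defining $G_4, G_5, G_k$ for $k\ge 6$), and the pendant decorations preserve planarity.

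I would then combine the four lemmas of Section~\ref{sec:npcom4} to establish equivalence of the instances. By Lemma~\ref{le:main1}, if $G$ has a two-coloured perfect matching then $H$ has a good coloured orientation, and by Lemma~\ref{le:main3} this yields a $k$-$L(2,1)$-labelling of $K$. Conversely, if $K$ has a $k$-$L(2,1)$-labelling, then by the other direction of Lemma~\ref{le:main3} the graph $H$ admits a coloured orientation, which by Lemma~\ref{le:goodorient} can be upgraded to a good coloured orientation, and finally Lemma~\ref{le:main2} returns a two-coloured perfect matching of $G$. Hence $G$ is a yes-instance of Planar Cubic Two-Colourable Perfect Matching if and only if $K$ is a yes-instance of Planar $k$-$L(2,1)$-Labelling.

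Since Planar Cubic Two-Colourable Perfect Matching is NP-complete by Theorem~\ref{th:cubplanp}, we conclude that Problem~\ref{pro:main} is NP-complete for every fixed $k \ge 4$. There is really no obstacle here: all of the technical difficulty has already been absorbed into Theorem~\ref{th:cubplanp} and into the gadget analysis summarised in Theorem~\ref{th:colgadget}. The only thing to be careful about is the edge case checking in the planarity argument for $K$ and making sure that the reduction is uniform in $k$ (which it is, since $k$ is fixed and the gadget $G_k$ is a fixed-size graph depending only on $k$).
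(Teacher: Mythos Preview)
Your proposal is correct and follows essentially the same approach as the paper: the paper's proof simply states that the theorem follows immediately from Theorem~\ref{th:cubplanp} together with Lemmas~\ref{le:main1}, \ref{le:goodorient}, \ref{le:main2} and~\ref{le:main3}, and your write-up just spells out that chain of implications (together with the routine NP-membership and planarity observations that the paper leaves implicit). One minor redundancy: Lemma~\ref{le:main3} already yields a \emph{good} coloured orientation directly (it invokes Lemma~\ref{le:goodorient} internally), so you need not insert Lemma~\ref{le:goodorient} separately in the converse direction.
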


\bibliographystyle{plain}

\end{document}